\numberwithin{equation}{section}
\theoremstyle{plain}
\newtheorem{theorem}{Theorem}[section] 
\newtheorem{lemma}[theorem]{Lemma} 
\newtheorem{proposition}[theorem]{Proposition} 
\newtheorem{proposition-definition}[theorem]{Proposition-Definition}
\theoremstyle{definition}
\theoremstyle{remark}
\newcommand{\tr}{\operatorname{tr}}
\newcommand{\abs}[1]{\lvert #1 \rvert} 
\newcommand{\norm}[1]{\lvert #1 \rvert}
\newcommand{\belongs}{\subseteq}
\newcommand{\eps}{\epsilon}
\newcommand{\EE}{\mathbb{E}}
\newcommand{\GG}{\mathbb{G}}
\newcommand{\defeq}{\colonequals}
\newcommand{\maps}{\colon}
\newcommand{\inmat}[1]{\left[\begin{smallmatrix} #1 \end{smallmatrix}\right]}
\newcommand{\NN}{\mathbb{N}}
\newcommand{\ZZ}{\mathbb{Z}}
\newcommand{\QQ}{\mathbb{Q}}
\newcommand{\CC}{\mathbb{C}}
\newcommand{\FF}{\mathbb{F}}
\newcommand{\A}{\mathbb{A}}
\newcommand{\PP}{\mathbb{P}}
\DeclareMathOperator{\Gal}{Gal}
\DeclareMathOperator{\GL}{GL}
\DeclareMathOperator{\SL}{SL}
\def\RR{\mathbb{R}}
\newcommand{\ol}{\overline}
\renewcommand{\hat}{\widehat}
\renewcommand{\leq}{\leqslant}
\renewcommand{\le}{\leqslant}
\renewcommand{\geq}{\geqslant}
\renewcommand{\ge}{\geqslant}
\newcommand{\ve}{\varepsilon}
\renewcommand{\eps}{\varepsilon}
\title{Pairs of commuting integer matrices}
\author{Tim Browning}
\author{Will Sawin}
\author{Victor Y. Wang}
\address{IST Austria\\
Am Campus 1\\
3400 Klosterneuburg\\
Austria}
\email{tdb@ist.ac.at}
\address{Princeton University\\
Fine Hall\\ 304 Washington Rd\\
Princeton NJ 08540\\ USA}\email{wsawin@math.princeton.edu}
\address{Institute of Mathematics\\
Academia Sinica\\
Taipei 106319\\
Taiwan}
\email{vywang@as.edu.tw}
\subjclass[2010]{11D45 (11T23, 15B33, 15B36)}
 \date{\today}
\begin{document}

\begin{abstract}
We prove  upper and lower bounds on the number of pairs of commuting $n\times n$ matrices
with integer entries in $[-T,T]$, as $T\to \infty$. 
Our work uses  Fourier analysis and leads to an analysis of exponential sums involving matrices over finite fields. These are bounded by combining a stratification result of Fouvry and Katz with a new result about the
 flatness of the commutator Lie bracket.
\end{abstract}

\maketitle

\thispagestyle{empty}
 \setcounter{tocdepth}{1}
 \tableofcontents

\section{Introduction}

 Let $\mathsf{M}_n$ denote the scheme of $n\times n$ matrices, for an integer
$n\ge 2$.
Let $\mathsf{V}_n\subset \mathsf{M}_n$ denote the closed subscheme $\tr = 0$.
Concretely, if $R$ is a ring, then
$\mathsf{M}_n(R)$ denotes the set of $n\times n$ matrices $A$ with entries in $R$,
and $\mathsf{V}_n(R)$ denotes the set of such matrices for which $\tr(A)=0$.
The aim of this note is to  count integral points of height $\le T$, as $T\to \infty$,
on the \emph{commuting variety} 
$$
\mathfrak{C}_n=\{(X,Y)\in \mathsf{M}_n(\CC)^2: XY=YX\},
$$
viewed as an affine variety in  $\mathsf{M}_n(\CC)^2 \cong \CC^{2n^2}$.
Thus we are interested in the behaviour of the counting function
$$
N(T) \defeq
\#\{(X,Y)\in \mathsf{M}_n(\ZZ)^2: \norm{X},\norm{Y}\le T,\; XY=YX\},
$$
as $T\to\infty$,
where $\norm{X}$ denotes the maximum modulus of any of the entries in a matrix $X\in \mathsf{M}_n(\ZZ)$.
It was shown by 
  Motzkin and Taussky in the paragraph after \cite[Thm.~6]{mot}
   that $\mathfrak{C}_n$  is irreducible and it follows from 
   work of Basili \cite[Thm.~1.2]{bas} that $\dim \mathfrak{C}_n=n^2+n$.
   Furthermore,   $\mathfrak{C}_n$ cannot be linear, since 
   it contains $(X,0)$ and $(0,Y)$,
   for any  $X,Y\in \mathsf{M}_n(\CC)$, 
    but it does not include the  point $(X,Y)$ if $X$ and $Y$ are non-commuting.  
In fact, a recursive formula for the
degree of $ \mathfrak{C}_n$ has been provided by
Knutson and Zinn-Justin \cite{knut}.

Since $\mathfrak{C}_n$ is defined by a system of homogeneous quadratic equations with coefficients in $\QQ$, we can view 
$\mathfrak{C}_n$ as an affine cone over a non-linear irreducible variety in $\PP^{2n^2-1}$ of dimension $n^2+n-1$, which is definable over $\QQ$. Appealing to the resolution of the {\em dimension growth conjecture} by Salberger \cite{salb}, it therefore follows that 
\begin{equation}
\label{DGC-bound}
N(T) 
\ll_{n,\eps} T^{n^2+n-1+\eps}
\end{equation}
for any $\eps>0$, where the implied constant depends at most on $n$ and $\ve$.
Moreover, the  $\eps$ can be removed if the variety has degree $\ge 5$ by \cite{DGCnew}, 
which is likely to be the case for $n\geq 3$ by the work in 
\cite{knut}.
On the other hand, we have the lower bound
\begin{equation}\label{eq:lower}
N(T)
\ge \#\{(X,Y)\in \mathsf{M}_n(\ZZ)\times \ZZ I_n: \norm{X},\norm{Y}\le T\}
\gg T^{n^2+1}
\end{equation}
coming from the special subvariety of scalar matrices $Y$,
where $I_n$ denotes the $n\times n$ identity matrix.
The following is our main result. 

\begin{theorem}
\label{THM:main1}
Let $T\ge 1$.
Then
$$N(T) \ll_{n} T^{n^2+2-
\frac{2}{n + 1}}.
$$
\end{theorem}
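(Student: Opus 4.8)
The plan is to estimate $N(T)$ by Fourier analysis, in the spirit of the circle method. Since $\tr[X,Y]=0$ identically, the constraint $XY=YX$ is the vanishing of the $\mathsf V_n$‑valued commutator $[X,Y]$, and a smooth delta‑method expansion (moduli $q\le Q\asymp T$), or Poisson summation against an auxiliary modulus, lets me write $N(T)$, up to admissible errors, as a weighted sum over $q$ and over the dual variable $\Theta\in\mathsf V_n(\ZZ/q\ZZ)$ of complete matrix exponential sums of the shape $\sum_{X,Y\bmod q}e\bigl(\tfrac1q\tr(\Theta[X,Y])+\tfrac1q\langle\Lambda,(X,Y)\rangle\bigr)$, where $e(z)=e^{2\pi iz}$, $\langle\cdot,\cdot\rangle$ is the trace pairing on pairs of matrices, and $\Lambda$ is a completion frequency arising from Poisson summation on the box $\norm X,\norm Y\le T$ (after, if needed, a dyadic splitting of $\norm X$). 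Using $\tr(\Theta[X,Y])=\tr\bigl((\mathrm{ad}_\Theta X)\,Y\bigr)$, the $Y$‑sum collapses onto the affine‑linear locus $\{X:\mathrm{ad}_\Theta X=\Lambda'\}$, so each such sum is governed by $\dim Z(\Theta)$ and by incidence conditions relating $\Lambda$ to $\im\mathrm{ad}_\Theta$ and to $(\ker\mathrm{ad}_\Theta)^{\perp}$.

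The difficulty is that bounding everything trivially only reproduces, at best, the dimension‑growth estimate \eqref{DGC-bound}: the contribution of small $\Theta$ --- which corresponds to the large‑dimensional families of commuting pairs responsible for \eqref{eq:lower} --- is too large, so one must extract genuine cancellation as $\Theta$ varies. This is where the stratification theorem of Fouvry and Katz enters: applied to the family of matrix exponential sums indexed by $\Theta$, it yields a stratification of $\Theta$‑space into locally closed pieces on which the sums obey square‑root‑type cancellation, the loss on each piece being offset by its codimension. Crucially, verifying the hypotheses of that theorem and pinning down the codimensions of the strata requires geometric control on the commutator map $[\cdot,\cdot]\colon\mathsf M_n\times\mathsf M_n\to\mathsf V_n$: one needs it to be flat away from the origin, equivalently that every non‑zero fibre has the expected dimension $2n^2-(n^2-1)=n^2+1$, so that the loci where $Z(\Theta)$ jumps are of large codimension. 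Establishing this flatness is the main new input, and I expect it to be the principal obstacle, since flatness certainly fails at the origin --- the fibre over $0$ is $\mathfrak C_n$, of dimension $n^2+n>n^2+1$ --- and one must show this is the only failure. I would prove it by a dimension count on the incidence scheme $\{(\Theta,X,M):\mathrm{ad}_\Theta X=M\}$ fibred over the conjugacy strata of $M\neq 0$, using centralizer structure theory together with the irreducibility of $\mathfrak C_n$ and the identity $\dim\mathfrak C_n=n^2+n$ recalled above; miracle flatness (source $\A^{2n^2}$ Cohen--Macaulay, target a smooth affine space) then upgrades the fibre‑dimension bound to flatness. Some care is needed in small characteristic, which forces the usual restriction to primes large in terms of $n$.

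With the stratified exponential‑sum bound in hand, the remainder is assembly and optimization: insert the bound into the Fourier expansion, sum over $\Theta$ within and across strata, sum over the completion frequencies $\Lambda$, and sum over the moduli $q\le Q\asymp T$. The generic stratum contributes with the strongest cancellation but over the widest range, while the deepest strata (near‑scalar $\Theta$) contribute no cancellation but occupy a set of small dimension; balancing these regimes fixes the truncation parameters, with the optimum governed by a scale of order $T^{n/(n+1)}$. It is this balance that converts the exponent $n^2+n-1$ of \eqref{DGC-bound} into the claimed $n^2+2-\tfrac{2}{n+1}=n^2+\tfrac{2n}{n+1}$; the $\Theta=0$ and scalar‑$\Theta$ terms, which encode the "trivial" families with $X$ or $Y$ scalar, may be handled separately and are harmless since only an upper bound is sought.
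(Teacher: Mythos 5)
Your outline follows the paper's strategy in broad strokes: detect $p\mid XY-YX$ for an auxiliary prime, Poisson summation leading to complete exponential sums $S(A,B;p)$ over commuting pairs mod $p$, Fouvry--Katz stratification combined with a flatness statement for the commutator map, and an optimisation over the modulus. But two of your steps have genuine gaps.

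First, the heart of the matter --- and the part your sketch leaves essentially unproved --- is the fibre-dimension bound $\dim\{(U,V):UV-VU=M\}\le n^2+1$ for $M\ne 0$ (Theorems \ref{THM:flat} and \ref{THM:dimension}). Your proposed route (a dimension count on the incidence scheme, ``centralizer structure theory'', and the irreducibility of $\mathfrak{C}_n$) is not a proof: the irreducibility and dimension of $\mathfrak{C}_n$ concern only the fibre over $M=0$ and say nothing about nonzero fibres, and invoking centraliser theory does not by itself control the codimension of the locus of $U$ for which a given nonzero $M$ lies in the image of $\mathrm{ad}_U$. The paper's actual argument is an induction on $n$ carried out by point counting over $\FF_q$: the fibre count is rewritten as $\sum_V q^{\dim C(V)}(1-q^{-1})\bm{1}_{C(V)\subseteq M^\perp}$, one averages over conjugation, and the key step is the bound of Lemma \ref{main-lem} on the probability $L(V,M)$ that the centraliser of a random conjugate of $V$ is trace-orthogonal to $M$, proved via block decompositions along the distinct eigenvalues of $V$. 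Without a substitute for this, \eqref{complex-fibre-bound} remains unestablished, and with it the whole bound.

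Second, you have misplaced the stratification. Fouvry--Katz is applied to $S(A,B;p)$ as a function of the Poisson frequency $(A,B)$ (your $\Lambda$), not of the character variable $\Theta$: the point is that $(A,B)$ is confined to a box of side $\asymp p/T$, so the number of frequencies reducing into a codimension-$j$ stratum is $\ll (p/T)^{2n^2-j+1}$ by \cite[Lemma~4]{bhb}, and this is what offsets the weaker exponential-sum bound on deep strata. Your $\Theta$ is summed completely over $\mathsf{V}_n(\FF_p)$, so a stratification of $\Theta$-space produces no comparable saving. Relatedly, flatness is not needed to ``verify the hypotheses'' of Fouvry--Katz, which applies unconditionally; it enters separately, through Lang--Weil, to give the pointwise bound $S(A,B;p)\ll_n p^{n^2+1}\bm{1}_{p\mid\tr A}\bm{1}_{p\mid\tr B}$ for $p\nmid(A,B)$, which is what controls the strata where stratification gives nothing. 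Finally, a single prime suffices (no sum over moduli $q\le Q$), and the optimisation lands at $p\asymp T^{(n+2)/(n+1)}$, i.e.\ a dual box of side $\asymp T^{1/(n+1)}$, rather than at a scale $T^{n/(n+1)}$.
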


Note that 
Theorem \ref{THM:main1} improves on 
\eqref{DGC-bound} for $n\ge 3$. In fact our exponent is always within $1$ of that occurring in the lower bound \eqref{eq:lower}, which we expect to be the truth. 
Our approach uses
matrix identities and
ideas from harmonic analysis, which will lead us to 
 analyse the fibres of the commutator map
\begin{equation}
\label{scheme-bracket}
[\cdot,\cdot]\maps \mathsf{M}_n^2\to \mathsf{V}_n,\quad (X,Y)\mapsto XY-YX,
\end{equation}
where we recall that
 $\mathsf{V}_n\subset \mathsf{M}_n$ denotes the closed subscheme $\tr = 0$.
In this way we 
have been led to prove the following result, 
the analogue of which  was proved by 
 Larsen and Lu \cite{LarsenLu}
 in the multiplicative setting.

\begin{theorem}\label{THM:flat}
The map 
\begin{equation}
\label{complex-commutator}
[\cdot,\cdot](\CC)\maps \mathsf{M}_n(\CC)^2\to \mathsf{V}_n(\CC),\quad (X,Y)\mapsto XY-YX
\end{equation}
is flat over the open set $\mathsf{V}_n(\CC) \setminus \{0\}$.
\end{theorem}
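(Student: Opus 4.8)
The plan is to convert flatness into a statement about fibre dimensions, reduce that statement to the single fibre over a rank-one nilpotent matrix, and then bound that fibre by an explicit Jordan stratification. First, since $\mathsf M_n(\CC)^2\cong\CC^{2n^2}$ and $\mathsf V_n(\CC)\cong\CC^{n^2-1}$ are smooth, the local criterion for flatness over a regular base shows that $[\cdot,\cdot](\CC)$ is flat at $(X,Y)$ exactly when the fibre through $(X,Y)$ has dimension $2n^2-(n^2-1)=n^2+1$ there. Every fibre is nonempty (it contains the point in question) and cut out by the $n^2-1$ equations defining $\mathsf V_n$, so Krull's height theorem gives that every fibre has all components of dimension $\ge n^2+1$. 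Hence flatness over $\mathsf V_n(\CC)\setminus\{0\}$ is equivalent to
\[
\dim[\cdot,\cdot]^{-1}(Z)\le n^2+1\qquad\text{for all }Z\in\mathsf V_n(\CC),\ Z\ne 0 .
\]

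Next I would reduce to $Z=E_{12}$. Put $B=\{Z\in\mathsf V_n(\CC):\dim[\cdot,\cdot]^{-1}(Z)\ge n^2+2\}$. Upper semicontinuity of fibre dimension makes $B$ Zariski closed; the identity $[gXg^{-1},gYg^{-1}]=g[X,Y]g^{-1}$ makes $B$ stable under $\GL_n$-conjugation; and $(X,Y)\mapsto(tX,Y)$ identifies $[\cdot,\cdot]^{-1}(Z)$ with $[\cdot,\cdot]^{-1}(tZ)$ for $t\ne 0$, so $B$ is a cone. Moreover $0\in B$ because $\dim\mathfrak C_n=n^2+n$. Assume $B\ne\{0\}$ and choose $Z_0\in B$ with $Z_0\ne 0$. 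If $Z_0$ is nilpotent, then $\overline{\GL_n\cdot Z_0}\subseteq B$ already contains the minimal nilpotent orbit (its partition $(2,1^{n-2})$ lies below every nonzero nilpotent partition), hence $E_{12}$. If $Z_0$ is not nilpotent, let $K=\overline{\{t\,gZ_0g^{-1}:t\in\CC^\times,\ g\in\GL_n\}}\subseteq B$, an irreducible closed cone with $\dim K=\dim(\GL_n\cdot Z_0)+1\ge(2n-2)+1$ since the non-scalar $Z_0$ has centralizer of dimension at most $n^2-2n+2$; cutting $K$ by the $n-1$ functions $c_2,\dots,c_n$ (the nonconstant characteristic-polynomial coefficients) produces $K\cap\mathcal N$ (with $\mathcal N$ the nilpotent cone), which contains $0$ and hence by Krull has dimension $\ge\dim K-(n-1)\ge n>0$; so $K$, and therefore $B$, contains a nonzero nilpotent matrix and hence $E_{12}$. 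In either case it remains to prove $\dim[\cdot,\cdot]^{-1}(E_{12})\le n^2+1$.

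To bound the fibre over $E_{12}$, project it to the first factor. For fixed $X$, the set $\{Y:[X,Y]=E_{12}\}$ is nonempty iff $E_{12}$ is trace-orthogonal to the centralizer $\mathfrak z(X)$ (because $\im\operatorname{ad}_X=\mathfrak z(X)^\perp$), and then it is a torsor under $\mathfrak z(X)$; thus $[\cdot,\cdot]^{-1}(E_{12})$ fibres over $U=\{X:E_{12}\perp\mathfrak z(X)\}$ with fibres of dimension $\dim\mathfrak z(X)$. Stratifying $\mathsf M_n$ by Jordan type, a stratum $C_\sigma$ with $k$ distinct eigenvalues, generalized-eigenspace dimensions $n_1,\dots,n_k$ and centralizer dimension $d$ has dimension $n^2-d+k$ and fibres over the variety $\mathcal F_\sigma=\GL_n/\prod_j\GL_{n_j}$ of eigenspace splittings, and it contributes at most $\dim(U\cap C_\sigma)+d$ to the fibre dimension. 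Since the spectral idempotents $P_1,\dots,P_k$ lie in $\mathfrak z(X)$, the condition $E_{12}\perp\mathfrak z(X)$ forces $\tr(E_{12}P_j)=0$ for all $j$; these depend only on the splitting and define a subvariety $\mathcal F_\sigma^0\subseteq\mathcal F_\sigma$. Chasing dimensions through $C_\sigma\to\mathcal F_\sigma$, the stratum contributes at most $n^2+1$ once one knows $\operatorname{codim}_{\mathcal F_\sigma}\mathcal F_\sigma^0\ge k-1$, which unwinds (via the orbit map $g\mapsto g^{-1}E_{12}g$) to
\[
\dim\bigl(\mathcal O_{E_{12}}\cap\{M:\tr(MP_j^0)=0,\ j=1,\dots,k\}\bigr)\le\dim\mathcal O_{E_{12}}-(k-1),
\]
with $P_j^0$ the standard coordinate idempotents for the block sizes $n_j$. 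Now $\mathcal O_{E_{12}}$ is precisely the set of matrices $uv^{T}$ with $u,v\ne 0$ and $v^{T}u=0$, the displayed condition reads $\sum_{i\in I_j}u_iv_i=0$ for each block $I_j$, and $\sum_i u_iv_i=0$ holds automatically; hence the intersection is the preimage of a codimension-$(k-1)$ linear subspace of $\{w\in\CC^n:\sum_i w_i=0\}$ under $\psi\colon uv^{T}\mapsto(u_1v_1,\dots,u_nv_n)$. A short case analysis on the support pattern of $w$ shows that every fibre of $\psi$ has dimension exactly $n-1$, so $\psi$ is equidimensional and the bound follows; feeding this back, each stratum contributes at most $n^2+1$.

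The first two reductions are soft. I expect the real work to be in the last paragraph: setting up the Jordan stratification and the fibrations $C_\sigma\to\mathcal F_\sigma$ carefully (including the harmless bookkeeping when several eigenvalues share a Jordan shape), verifying that the constraints in $E_{12}\perp\mathfrak z(X)$ beyond the block-trace constraints do not spoil the count, and—the technical heart—proving that the map $\psi$ is equidimensional, which is exactly the point where the rank-one-ness of $E_{12}$ (and hence the usefulness of the reduction to $Z=E_{12}$) enters in an essential way.
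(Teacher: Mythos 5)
Your overall architecture---miracle flatness plus Krull's theorem to reduce to the fibre-dimension bound \eqref{complex-fibre-bound}, then a direct characteristic-zero analysis of a single degenerate fibre---is genuinely different from the paper, which instead spreads out to characteristic $p$ and proves the dimension bound by point counting, Fourier analysis over $\FF_q$, and an induction on $n$ (Lemma~\ref{main-lem}). Your first reduction is correct and matches Proposition~\ref{dim-flat-equiv}, and your analysis of the fibre over $E_{12}$ is essentially sound: the torsor structure under the centraliser, the Jordan stratification, the unwinding to the conjugacy class of $E_{12}$, and the equidimensionality of $\psi\colon uv^{T}\mapsto(u_1v_1,\dots,u_nv_n)$ (every fibre does have dimension exactly $n-1$) all check out. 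The problem is the second paragraph.

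You assert that $B=\{Z\in\mathsf{V}_n(\CC):\dim[\cdot,\cdot]^{-1}(Z)\ge n^2+2\}$ is Zariski closed ``by upper semicontinuity of fibre dimension''. Fibre dimension is upper semicontinuous on the \emph{source} (the set of $(X,Y)$ at which the local fibre dimension is $\ge m$ is closed in $\mathsf{M}_n(\CC)^2$), but on the \emph{target} this is only a theorem for proper (or at least closed) morphisms; in general $B$ is merely the image of that closed set, hence constructible, because components of the fibres can escape to infinity as $Z$ specialises. The commutator map is certainly not proper---its fibres are positive-dimensional affine varieties---so you have no right to closedness, and $E_{12}$, lying as it does in the closure of every nonzero $\GL_n\times\GG_m$-orbit of $\mathsf{V}_n(\CC)$, is precisely the maximally special point at which constructible-but-not-closed behaviour would show up. Your degeneration argument therefore only yields $E_{12}\in\overline{B}$, not $E_{12}\in B$, and the reduction collapses. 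This is not a cosmetic issue, because your final computation leans essentially on the rank-one parametrisation of the orbit of $E_{12}$: for a general nonzero $Z$ you would instead need the estimate that the set of conjugates $M$ of $Z$ satisfying $\tr(MP_j^0)=0$ for $j=1,\dots,k$ has codimension $\ge k-1$ in the full conjugacy class, for every class and every block decomposition. That statement is true, but it is essentially the hard content of the paper's inductive argument (compare Lemma~\ref{aux-lem}(7), which is the two-block case over $\FF_q$, and the recursion in Lemma~\ref{main-lem}). So you must either justify the jump-down-free specialisation of fibre dimension for this particular map by some independent argument, or prove the stratification estimate for all $Z$, which is substantially more work than the $E_{12}$ case you carried out.
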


As explained below in Proposition~\ref{dim-flat-equiv},
Theorem~\ref{THM:flat} is equivalent to the \emph{pointwise} bound
\begin{equation}
\label{complex-fibre-bound}
\dim \left\{(U,V)\in \mathsf{M}_n(\CC)^2: UV-VU = M\right\} \le n^2+1
\end{equation}
on the dimensions of the fibres of the commutator map
over matrices $M\in \mathsf{V}_n(\CC) \setminus \{0\}$.\footnote{In fact,
equality must hold in \eqref{complex-fibre-bound}, since  the map \eqref{complex-commutator} is surjective by work of Shoda \cite{shoda}.}
For comparison, earlier work has examined the dimensions of the fibres 
\emph{on average} over certain small families of matrices
in $\mathsf{V}_n(\CC) \setminus \{0\}$,
such as diagonal matrices (Knutson \cite{Knutson})
or rank-one matrices (Neubauer \cite{Neubauer}).
Moreover, Young \cite{Young}
proved a finer conjecture of Knutson \cite{Knutson}
concerning the irreducible components of the diagonal commutator scheme.
Our new methods may be capable of re-proving,
and generalising to other families,
some of these earlier results.

\subsection*{Acknowledgements}
The authors are very grateful to Alina Ostafe, Matthew Satriano and  Igor Shparlinski
for drawing their attention to this problem and for useful comments, and to  Michael Larsen and Peter Sarnak for their helpful correspondence. 
We also thank the referee for their valuable input.  
While working on this paper
the first author was supported  by 
a FWF grant (DOI 10.55776/P36278),
the second author by a Sloan Research Fellowship, 
and 
the third author by the European Union's Horizon 2020 research and 
innovation programme under the Marie Sk\l{}odowska-Curie Grant Agreement No.~101034413.

\section{Background on flatness}

Before proceeding, we recall some classical criteria for flatness, which are useful both for interpreting Theorem \ref{THM:flat} and for proving it.
Throughout this paper, a \emph{variety} will be a locally closed subset of a projective space $\PP^N$ over a field $k$ (which is often called a \emph{quasi-projective variety}).
If $k$ is algebraically closed, then varieties $V$ over $k$ may be completely understood in terms of their $k$-points $V(k)$ by
\cite[Chapter~II, Proposition~2.6]{hart},
and
we will take this classical viewpoint 
 whenever possible. 

\begin{proposition}
\label{dim-flat-equiv}
Let $f\maps E_1\to E_2$ be a morphism of smooth, irreducible varieties over a field $k$.
Let $k'$ be a field containing $k$.
Then the following are equivalent:
\begin{enumerate}
\item The morphism $f$ is flat.

\item The morphism $f\otimes k'\maps E_1\otimes k'\to E_2\otimes k'$
is flat.

\item For every point $y\in E_2(\ol{k})$,
the fibre $f^{-1}(y)$ is either empty or of dimension exactly $\dim{E_1}-\dim{E_2}$.

\item For every point $y\in E_2(\ol{k})$,
we have $\dim(f^{-1}(y))\le \dim{E_1}-\dim{E_2}$.
\end{enumerate}
\end{proposition}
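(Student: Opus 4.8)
The plan is to reduce the proposition to two classical facts about flat morphisms: faithfully flat descent of flatness, which gives the equivalence of (1) and (2), and the local flatness criterion in the Cohen--Macaulay/regular setting (``miracle flatness''), which gives the equivalence of (1) and (4). Since (3)$\,\Rightarrow\,$(4) is immediate, it then suffices to establish (1)$\,\Leftrightarrow\,$(2), (1)$\,\Rightarrow\,$(3) and (4)$\,\Rightarrow\,$(1).

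For (1)$\,\Leftrightarrow\,$(2) I would note that $E_1\otimes k'$ is canonically the fibre product $E_1\times_{E_2}(E_2\otimes k')$, so that $f\otimes k'$ is the base change of $f$ along the faithfully flat morphism $E_2\otimes k'\to E_2$; as flatness is stable under arbitrary base change and descends along faithfully flat morphisms, $f$ is flat exactly when $f\otimes k'$ is. (Once (1)$\,\Leftrightarrow\,$(4) is known, one can alternatively check that condition (4) itself is unchanged by extension of the ground field, using upper semicontinuity of fibre dimension and the Jacobson property; I would fall back on this if a clean reference for descent were not at hand.)

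For (1)$\,\Rightarrow\,$(3): a flat morphism of finite type with Noetherian target is open, so $f(E_1)$ is dense in the irreducible $E_2$ and $f$ is dominant; more to the point, the dimension formula for flat morphisms gives $\dim_x f^{-1}(f(x))=\dim_x E_1-\dim_{f(x)}E_2$ for every $x\in E_1$, and irreducibility of $E_1$ and $E_2$ lets one replace these local dimensions by the global ones. Hence every non-empty fibre of $f$ is equidimensional of dimension $\dim E_1-\dim E_2$; since passing from a closed point of $E_2$ to an $\ol{k}$-point above it amounts to a base change along a field extension and so preserves fibre dimension, this gives (3).

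For (4)$\,\Rightarrow\,$(1): the non-flat locus of $f$ is closed in $E_1$, which is Jacobson, so it is enough to prove $f$ flat at each closed point $x\in E_1$. Put $y=f(x)$; since $\kappa(x)/k$ is finite, $y$ is a closed point of $E_2$, and irreducibility gives $\dim\OO_{E_1,x}=\dim E_1$ and $\dim\OO_{E_2,y}=\dim E_2$. Now the general fibre-dimension inequality $\dim\OO_{E_1,x}\le\dim\OO_{E_2,y}+\dim\OO_{f^{-1}(y),x}$, together with $\dim\OO_{f^{-1}(y),x}\le\dim f^{-1}(y)\le\dim E_1-\dim E_2$ --- the last step by choosing an embedding $\kappa(y)\hookrightarrow\ol{k}$ and applying hypothesis (4) to the resulting $\ol{k}$-point of $E_2$, fibre dimension being insensitive to this field extension --- forces the equality $\dim\OO_{E_1,x}=\dim\OO_{E_2,y}+\dim\OO_{f^{-1}(y),x}$. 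As $E_1$ is smooth, $\OO_{E_1,x}$ is regular, hence Cohen--Macaulay, while $\OO_{E_2,y}$ is regular, so miracle flatness shows $f$ is flat at $x$. I expect this last implication to be the only real obstacle: the hypothesis controls fibres only over $\ol{k}$-points, whereas miracle flatness is applied at points of the source, so the crux is the bookkeeping --- via the Jacobson property and invariance of dimension under field extension --- that transfers the $\ol{k}$-point bound to local fibre-dimension equalities at closed points of $E_1$. The remaining implications are routine consequences of standard properties of flat and smooth morphisms, e.g.\ those collected in \cite{hart}.
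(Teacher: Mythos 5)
Your proposal is correct and follows essentially the same route as the paper: base change plus faithfully flat descent for (1)$\Leftrightarrow$(2), the dimension formula for flat local homomorphisms for (1)$\Rightarrow$(3), and miracle flatness (regular target, Cohen--Macaulay source) for the converse direction. The only cosmetic difference is that you prove (4)$\Rightarrow$(1) directly at closed points, whereas the paper factors it as (4)$\Rightarrow$(3) (via the same lower bound on fibre dimension) followed by (3)$\Rightarrow$(1); your extra bookkeeping transferring the $\ol{k}$-point hypothesis to closed points is sound.
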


\begin{proof}
(1)$\Leftrightarrow$(2):
Flatness is preserved by base change from $k$ to $k'$.
(See \cite[\href{https://stacks.math.columbia.edu/tag/047C}{Tag~047C}]{stacks-project}
or \cite[\href{https://stacks.math.columbia.edu/tag/02JZ}{Tag~02JZ}]{stacks-project}.)
The key point is that the inclusion homomorphism $k\to k'$ is flat.



(1)$\Rightarrow$(3):
This follows from \cite[Theorem~15.1 (or paragraph~1 of \S~23)]{mats}.

(3)$\Rightarrow$(1):
This follows from miracle flatness \cite[Corollary to Theorem~23.1]{mats},
which applies since $E_1$ and $E_2$ are smooth, irreducible varieties over $k$.

(3)$\Rightarrow$(4):
This is trivial.

(4)$\Rightarrow$(3):
Any non-empty fibre $f^{-1}(y)$
has dimension at least $\dim{E_1}-\dim{E_2}$,
and thus exactly $\dim{E_1}-\dim{E_2}$ if (4) holds.
\end{proof}

While a direct proof of Theorem \ref{THM:flat} is possible 
by induction on $n$ (via  algebraic geometry, Grassmannians,
symmetry, and dimension counting), we shall find it more convenient to  prove instead a characteristic $p$ version.
We will  prove the following  result 
in \S~\ref{prove-flatness} 
by induction on $n$
for $p\ge n$, using  symmetry and point counting.

\begin{theorem}\label{THM:dimension}
For all primes $p\gg_n1$,
and for all points $M\in \mathsf{V}_n(\ol{\FF}_p) \setminus \{0\}$, we have
$$
\dim \left\{(U,V)\in \mathsf{M}_n(\overline{\FF}_p)^2: UV-VU = M\right\} \le n^2+1.
$$
\end{theorem}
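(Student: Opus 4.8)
\emph{Reduction to point counting.}
The plan is to deduce the estimate from a uniform count over finite fields: for every prime power $q$ of characteristic $p\gg_n 1$ and every $M\in\mathsf V_n(\FF_q)\setminus\{0\}$,
\begin{equation}\label{eq:ptcount}
\#\{(U,V)\in\mathsf M_n(\FF_q)^2 : UV-VU=M\}\le C_n\,q^{\,n^2+1},
\end{equation}
for a constant $C_n$ depending only on $n$. Granting \eqref{eq:ptcount}: any $M\in\mathsf V_n(\overline{\FF}_p)\setminus\{0\}$ is defined over a finite field $\FF_{q_0}$, so the commutator fibre $f^{-1}(M):=\{(U,V)\in\mathsf M_n^2:UV-VU=M\}$ is a variety over $\FF_{q_0}$; feeding \eqref{eq:ptcount} over the extensions $\FF_{q_0^m}$ into the Lang--Weil lower bound $\#f^{-1}(M)(\FF_{q_0^m})\gg q_0^{\,m\dim f^{-1}(M)}$, valid for suitable arbitrarily large $m$, forces $\dim f^{-1}(M)\le n^2+1$. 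The characteristic is unchanged along $\FF_{q_0}\subset\FF_{q_0^m}$, so only \eqref{eq:ptcount} with $p\gg_n1$ is needed. We use $p\gg_n1$ to make $n$ invertible in $\FF_q$ (so that $\mathsf M_n=\mathsf V_n\oplus\FF_q\cdot I$ is orthogonal for the trace pairing, and a traceless matrix with a single eigenvalue is nilpotent) and to legitimise the normalisations below; $p\ge n$, or slightly more, should suffice.

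\emph{Base case, and the point of the induction.}
For all $n$ and all $M\ne0$ there is an easy bound $\dim f^{-1}(M)\le n^2+n-1$: the fibre maps onto $P:=\{U:M\in[\mathsf M_n,U]\}$ with the fibre over $U\in P$ an affine translate of the centralizer $Z(U)$, and the same $Z(U)$ is exactly the fibre over $U$ of the first projection restricted to the hyperplane section $\mathcal C_M:=\{(S,T)\in\mathfrak C_n:\tr(MT)=0\}$ of the commuting variety (since $U\in P$ means $Z(U)\subseteq\{T:\tr(MT)=0\}$), whence $\dim f^{-1}(M)\le\dim\mathcal C_M$; and $\dim\mathcal C_M=n^2+n-1$ because $\mathfrak C_n$ is irreducible of dimension $n^2+n$ and, for $M\ne0$, not contained in that hyperplane. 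This already gives \eqref{eq:ptcount} for $n=2$ (which can also be checked by hand from the rational canonical forms of a nonzero traceless $2\times2$ matrix). For $n\ge3$ the content of the theorem is the further gain of $n-2$, and this is what the induction on $n$ supplies.

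\emph{The irreducible locus is automatic.}
Assume \eqref{eq:ptcount} below $n$. By conjugation invariance (``symmetry'') we may take $M$ in any convenient $\GL_n$-normal form. Split the solution set according to whether $(U,V)$ generates $\mathsf M_n(\overline{\FF}_q)$ as an $\overline{\FF}_q$-algebra. On the \emph{irreducible} locus, where $\overline{\FF}_q\langle U,V\rangle=\mathsf M_n(\overline{\FF}_q)$, we have $Z(U)\cap Z(V)=Z(\overline{\FF}_q\langle U,V\rangle)=\overline{\FF}_q\cdot I$, so the differential $(\dot U,\dot V)\mapsto[\dot U,V]+[U,\dot V]$ of the commutator map has image $Z(V)^\perp+Z(U)^\perp=(Z(U)\cap Z(V))^\perp=\mathsf V_n$ (orthogonality for the trace pairing) and is therefore surjective. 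Hence the commutator map is smooth at $(U,V)$, so $f^{-1}(M)$ is smooth there of dimension exactly $2n^2-(n^2-1)=n^2+1$, and this open locus contributes $O_n(q^{n^2+1})$ to \eqref{eq:ptcount}. This step uses neither $M\ne0$ nor the induction hypothesis.

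\emph{The reducible locus: the main obstacle.}
On the complementary \emph{reducible} locus, Burnside's theorem supplies a proper nonzero common invariant subspace $W\subseteq\overline{\FF}_q^n$ for $U$ and $V$; then $M$ preserves $W$ as well, and in a basis adapted to $0\subsetneq W\subsetneq\overline{\FF}_q^n$ all of $U,V,M$ are block upper triangular. Writing the associated parabolic subalgebra as $\mathfrak p_W=\mathfrak l\oplus\mathfrak n$, with Levi $\mathfrak l\cong\mathfrak{gl}_m\oplus\mathfrak{gl}_{n-m}$ and abelian nilradical $\mathfrak n\cong\mathsf M_{m,n-m}$, the equation $UV-VU=M$ splits into the two diagonal-block commutator equations $[U_i,V_i]=M_i$ (in sizes $m$ and $n-m$) plus an \emph{inhomogeneous linear} equation for the nilradical components of $U$ and $V$, namely $m(n-m)$ equations in $2m(n-m)$ unknowns. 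Summing over the admissible $W$ (those with $MW\subseteq W$: a proper closed subvariety of the Grassmannian once $M\ne0$, hence of dimension $\le m(n-m)-1$), bounding the diagonal-block counts by the induction hypothesis when $M_i\ne0$ and by $\#\mathfrak C_m(\FF_q)\asymp q^{m^2+m}$ when $M_i=0$, and bounding the nilradical system trivially by $q^{2m(n-m)}$, one is reduced to a delicate bookkeeping problem: the product of these factors must never exceed $C_nq^{n^2+1}$. This is the crux. The danger is in configurations bad in several ways at once --- a vanishing diagonal block $M_i=0$ (injecting the large commuting variety) together with Levi data for which both Sylvester operators $X\mapsto U_1X-XU_2$ and $Y\mapsto V_1Y-YV_2$ have large kernel (so the nilradical system is far from surjective) --- and the work is to confine such coincidences to loci of low dimension, again using $M\ne0$: when $M$ is not nilpotent one takes $W$ a sum of generalized eigenspaces, so that $M=M_1\oplus M_2$ with $M_1,M_2$ sharing no eigenvalue and the nilradical map surjective, while the residual case $M$ nilpotent --- essentially the regular nilpotent --- is the hardest, hinging on the sharp interplay between $\dim Z(M)$ and the dimension of the family of $M$-invariant subspaces. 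Making all of this uniform in $q$, and checking that $p\gg_n1$ suffices throughout, completes the induction.
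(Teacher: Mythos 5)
Your reduction to a uniform point count over $\FF_q$ via Lang--Weil is sound and coincides with the paper's starting point, and your treatment of the irreducible locus is correct and attractive: when $\ol{\FF}_q\langle U,V\rangle=\mathsf{M}_n(\ol{\FF}_q)$ one has $Z(U)\cap Z(V)=\ol{\FF}_q\cdot I$, the differential of the commutator map surjects onto $\mathsf{V}_n$, and that locus contributes $O_n(q^{n^2+1})$. But the theorem is not proved, because the reducible locus --- which you yourself flag as ``the crux'' --- is only described, and the bookkeeping you set up does not close. Concretely, consider $W$ of dimension $m$ with $\im M\subseteq W\subseteq \ker M$ and $\rank M=1$, so that both diagonal blocks of $M$ vanish. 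Your bounds give $\asymp q^{m^2+m}$ and $\asymp q^{(n-m)^2+(n-m)}$ for the two diagonal commuting-variety counts, $q^{2m(n-m)}$ trivially for the nilradical system, and a family of admissible $W$ of dimension $(m-1)(n-1-m)\ge 0$; the product already has exponent at least $n^2+n$, exceeding $n^2+1$ by $n-1$ for every $1\le m\le n-1$. Closing this requires showing that, outside loci of controlled codimension, the inhomogeneous linear map $(X,Y)\mapsto U_1Y-YU_2+XV_2-V_1X$ has essentially full rank $m(n-m)$ --- a genuinely delicate stratified statement that is the entire content of the theorem and for which no argument is supplied. A further point: the sentence ``when $M$ is not nilpotent one takes $W$ a sum of generalized eigenspaces'' is not available to you, since $W$ is produced by Burnside from the pair $(U,V)$ and cannot be chosen; a generalized eigenspace of $M$ need not be $U$- or $V$-invariant.

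For comparison, the paper sidesteps this decomposition entirely. After the same Lang--Weil reduction it Fourier-dualises the count, executes the sum over $U$, and is left with $\sum_{VZ=ZV}(\bm{1}_{\tr(ZM)=0}-q^{-1})$; this reduces matters to bounding the probability $L(V,M)$ that the centraliser of a random conjugate of $V$ lies in $M^\perp$. The induction on $n$ is then run on the block structure of $V$ (blocks with disjoint spectra), for which the centraliser is genuinely block-diagonal, so no nilradical system ever arises; the required conjugation averages are controlled by explicit averaging over unipotent subgroups (Lemmas~\ref{E-avg-lem} and~\ref{aux-lem}). If you wish to salvage your route, the reducible-locus analysis would need an input of comparable strength to Lemma~\ref{main-lem}.
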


We shall prove that this result is equivalent to Theorem \ref{THM:flat}.
In fact Theorem \ref{THM:flat} and this equivalence are not needed for the Diophantine results in this paper, but are included purely for the relevance of Theorem \ref{THM:flat} to algebraic geometry.
The key point is that miracle flatness, such as the equivalence  (1) $\Leftrightarrow$ (3) in Proposition 
\ref{dim-flat-equiv},  converts flatness statements like  Theorem~\ref{THM:flat} 
into dimension statements like Theorem~\ref{THM:dimension}.
To deal with the fact that one statement occurs in characteristic zero and one statement occurs in characteristic $p$, we use standard ``spreading out'' arguments\footnote{A general reference for such arguments is \cite{Poonen}*{Theorem~3.2.1(iv)}, but we don't  really need this particular form of the theorem.} to convert between characteristics $0$ and $p$.
This consists of setting everything up over $\operatorname{Spec} \mathbb Z$ and applying general results that natural properties (in this case, flatness) define open subsets of varieties over $\operatorname{Spec} \mathbb Z$.
At the heart of this lies the observation that an open subset of $\operatorname{Spec} \mathbb Z$ contains the characteristic $0$ point $\operatorname{Spec} \mathbb Q$ if and only if it contains the characteristic $p$ point $\operatorname{Spec} \mathbb F_p$, for all but finitely many primes $p$.

\begin{proof}[Proof of the equivalence of Theorems \ref{THM:flat} and  \ref{THM:dimension}]
We  broadly follow \cite{LarsenLu}*{\S~5}.
Let $\mathsf{V}_n-0$ be the open subscheme of $\mathsf{V}_n$
with zero section removed, over $\ZZ$.
Thus $(\mathsf{V}_n-0)(k) = \mathsf{V}_n(k)\setminus \{0\}$, for every field $k$.
Let $\mathsf{C}_n \subseteq \mathsf{M}_n^2$ be the commuting scheme
$XY=YX$, over $\ZZ$.
Let $f\maps \mathsf{M}_n^2-\mathsf{C}_n\to \mathsf{V}_n-0$
be the commutator map in  \eqref{scheme-bracket},
restricted to $\mathsf{M}_n^2-\mathsf{C}_n$.
Let $O\subseteq \mathsf{M}_n^2-\mathsf{C}_n$ be the \emph{flat locus} of $f$;
i.e.~the set of points $x\in \mathsf{M}_n^2-\mathsf{C}_n$ at which the morphism $f$ is flat.
Then $O$ is open by
\cite[\href{https://stacks.math.columbia.edu/tag/0399}{Tag~0399}]{stacks-project}.
Theorem~\ref{THM:flat} is the statement that the base change $f\otimes \CC \colon (\mathsf{M}_n^2-\mathsf{C}_n)_\mathbb C \to (\mathsf{V}_n -0)_{\mathbb C}$ is flat.
By the equivalence (1)$\Leftrightarrow$(2) in Proposition~\ref{dim-flat-equiv}, this is equivalent to the flatness of $f\otimes \QQ \colon (\mathsf{M}_n^2-\mathsf{C}_n)_\mathbb Q \to (\mathsf{V}_n -0)_{\mathbb Q}$,
which is equivalent to $O$ containing the entire characteristic $0$ fibre, $(\mathsf{M}_n^2-\mathsf{C}_n)_\mathbb Q$.
An open set of a scheme of finite type over $\mathbb Z$ contains the entire characteristic $0$ fibre if and only if it contains all but finitely many characteristic $p$ fibres,\footnote{The analytically minded reader may think of this as a consequence of the Lang--Weil estimate for the complement of the open set in question.}
so Theorem~\ref{THM:flat} is equivalent to $O$ containing $(\mathsf{M}_n^2-\mathsf{C}_n)_{\mathbb F_p}$ for all but finitely many $p$. We 
call this equivalence $(\star)$.

If $O$ contains $(\mathsf{M}_n^2-\mathsf{C}_n)_{\mathbb F_p}$,
then since the base change of a flat morphism is flat (by part (1) of
\cite[\href{https://stacks.math.columbia.edu/tag/047C}{Tag~047C}]{stacks-project}),
the base change of $f$ along $(\mathsf{V}_n -0)_{ \overline{\mathbb F}_p} \to (\mathsf{V}_n -0)_{ \mathbb Z}$ is flat.
This base change is $f\otimes\ol{\FF}_p \colon (\mathsf{M}_n^2-\mathsf{C}_n)_{\overline{\mathbb F}_p} \to (\mathsf{V}_n -0)_{\overline{\mathbb F}_p}$.
From this (and the implication (1)$\Rightarrow$(4) in Proposition~\ref{dim-flat-equiv})
the statement of Theorem~\ref{THM:dimension} follows, since
$$\dim \mathsf{M}_n(\ol{\FF}_p)^2 - \dim \mathsf{V}_n(\ol{\FF}_p) = 2n^2-(n^2-1)=n^2+1.$$
Thus Theorem~\ref{THM:flat} implies Theorem \ref{THM:dimension}.

For the reverse implication, we appeal to  miracle flatness, which for the generality we need is established in  \cite[\href{https://stacks.math.columbia.edu/tag/00R4}{Tag~00R4}]{stacks-project} or \cite[Theorem~23.1]{mats}. This states that a morphism between regular Noetherian schemes is flat in a neighbourhood of any point where the dimension of the fibre plus the dimension of the target equals the dimension of the source. But then Theorem~\ref{THM:dimension} implies that
for all $p\gg_n 1$, the set $O$ contains the entire characteristic $p$ fibre of $\mathsf{M}_n^2 - \mathsf{C}_n$, which by the equivalence $(\star)$ described above implies Theorem~\ref{THM:flat}.
\end{proof}

\section{Matrix exponential sums}

Over any field $k$, 
we will often make the identification $\mathsf{M}_n(k)\cong k^{n^2}$. 
The usual dot product on $X,Y\in \mathsf{M}_n(k)$
is then given by $\tr(X^tY) = \sum_i \sum_j X_{ji}Y_{ji}$.
For  the purposes of harmonic analysis to come, however, 
it will be more convenient to work with
the pairing $\mathsf{M}_n(k)\times \mathsf{M}_n(k)\to k$
given by $(X,Y)\mapsto \tr(XY) = \sum_i \sum_j X_{ij}Y_{ji}$,
which can be viewed as the usual dot product after an invertible linear change of variables $X\mapsto X^t$.
(Abstractly, the key point
is that this pairing is \emph{non-degenerate},
in the sense that
for any non-zero $X\in \mathsf{M}_n(k)$,
the linear form $Y\mapsto \tr(XY)$ is non-zero.)

Given 
 $A,B\in \mathsf{M}_n(\ZZ)$, our work will lead us to analyse the complete exponential sum
\begin{equation}
\label{eq:Sp}
S(A,B;p)
\defeq \sum_{\substack{(U,V)\in \mathsf{M}_n(\FF_p)^2\\
UV-VU=0
}} e_p(\tr(AU+BV)),
\end{equation}
for a sufficiently large prime $p$.  
It follows from the point count of 
Feit--Fine \cite{FeitFine} that 
\begin{equation}\label{eq:FF}
S(0,0;p) \ll_n p^{n^2+n}.
\end{equation}
We seek to find conditions on $A,B$ under which we can show  that cancellation occurs in
$S(A,B;p)$.

We begin by recording the observation that 
\begin{equation}
\label{define-key-local-fourier-transforms}
S(A,B;p)
= \frac{1}{\#\mathsf{M}_n(\FF_p)} \sum_{U,V,Z\in \mathsf{M}_n(\FF_p)} e_p(\tr(Z(UV-VU) + AU+BV)).
\end{equation}
The cyclic property of the trace ensures that 
$\tr(AB)=\tr(BA)$ and 
$\tr(ABC)=\tr(CAB)=\tr(BCA)$, for any $A,B,C\in \mathsf{M}_n(\FF_p)$. Hence
$\tr(Z(UV-VU) + AU)=
\tr(U(VZ-ZV + A))$, so that we can   average over $U$  
to conclude that 
\begin{equation*}
S(A,B;p)
= \sum_{\substack{V,Z\in \mathsf{M}_n(\FF_p)\\
VZ-ZV+A = 0}}
e_p(\tr(BV)).
\end{equation*}
In particular, it is clear that 
\begin{equation}\label{eq:LW}
|S(A,B;p)|\leq S(A,0;p)=
\#\left\{(V,Z)\in \mathsf{M}_n(\FF_p)^2: VZ-ZV+A = 0\right\},
\end{equation}
for any $B\in \mathsf{M}_n(\ZZ)$. 
We are now ready to prove the following result.

\begin{lemma}\label{lem:exp}
If $p\nmid A$ or $p\nmid B$, then 
$$
S(A,B;p) \ll_n p^{n^2+1} \bm{1}_{p\mid \tr(A) } \bm{1}_{p\mid \tr(B) }.
$$
\end{lemma}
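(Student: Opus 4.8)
The plan is to first reduce, via a symmetry argument, to the case where both $\tr(A)$ and $\tr(B)$ vanish modulo $p$, and then to extract the bound from Theorem~\ref{THM:dimension} together with a routine count of $\FF_p$-points.

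First I would record the symmetry $S(A,B;p)=S(B,A;p)$, obtained by interchanging $U$ and $V$ in \eqref{eq:Sp}, since the relation $UV=VU$ is symmetric in $U$ and $V$. Next I would use the translation invariance of the commuting condition: for any $c\in\FF_p$ the map $U\mapsto U+cI_n$ is a bijection of $\mathsf{M}_n(\FF_p)$ that preserves $UV-VU$, while it multiplies the summand in \eqref{eq:Sp} by $e_p(c\tr(A))$. Hence $S(A,B;p)=e_p(c\tr(A))\,S(A,B;p)$ for every $c$, so $S(A,B;p)=0$ whenever $p\nmid\tr(A)$; applying the same reasoning to $V\mapsto V+cI_n$ shows $S(A,B;p)=0$ whenever $p\nmid\tr(B)$. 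This already establishes the asserted bound, with right-hand side $0$, unless $p\mid\tr(A)$ and $p\mid\tr(B)$, so we may henceforth assume both divisibilities hold.

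It then remains to show $|S(A,B;p)|\ll_n p^{n^2+1}$ under the hypothesis that $p\nmid A$ or $p\nmid B$. Interchanging $A$ and $B$ if necessary, which is harmless by the symmetry just noted, we may assume $p\nmid A$. By \eqref{eq:LW},
\[
|S(A,B;p)|\le S(A,0;p)=\#\bigl\{(V,Z)\in\mathsf{M}_n(\FF_p)^2 : VZ-ZV=-A\bigr\}.
\]
Since $p\nmid A$ while $p\mid\tr(A)$, the reduction of $-A$ modulo $p$ lies in $\mathsf{V}_n(\FF_p)\setminus\{0\}$, so for $p\gg_n 1$ Theorem~\ref{THM:dimension} tells us that the set on the right, viewed as the set of $\FF_p$-points of a closed subscheme of $\mathsf{M}_n^2\cong\mathbb{A}^{2n^2}$ defined by $n^2$ equations of degree $\le 2$, has dimension at most $n^2+1$ over $\overline{\FF}_p$. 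A standard estimate for the number of $\FF_p$-points of a variety of bounded complexity (Lang--Weil, or a more elementary argument) then bounds this count by $\ll_n p^{n^2+1}$, with an implied constant independent of $p$ because the number and degrees of the geometric irreducible components are bounded in terms of $n$. This yields $|S(A,B;p)|\ll_n p^{n^2+1}$. For the finitely many primes $p$ not covered by the condition $p\gg_n1$, the trivial bound $|S(A,B;p)|\le S(0,0;p)\ll_n p^{n^2+n}$ from \eqref{eq:FF} suffices, since then $p$ lies in a range bounded in terms of $n$, so that $p^{n^2+n}\ll_n p^{n^2+1}$.

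The genuine content of the lemma is concentrated in Theorem~\ref{THM:dimension}; granting that, the only delicate point is that the passage from the geometric dimension bound to the count of $\FF_p$-points must be uniform in $p$, which is ensured by keeping track of the bounded complexity of the defining equations. The remaining ingredients — the symmetry, the vanishing forced by the trace, and the treatment of small primes — are routine.
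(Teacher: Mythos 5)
Your proof is correct and follows essentially the same route as the paper: reduce to the dimension bound of Theorem~\ref{THM:dimension} via \eqref{eq:LW} and conclude with Lang--Weil, with small primes handled by the trivial bound \eqref{eq:FF}. The only cosmetic differences are that you obtain the trace indicators by the shift $U\mapsto U+cI_n$ (rather than from the emptiness of the fibre over a matrix of non-zero trace) and handle the case $p\nmid B$ by the symmetry $S(A,B;p)=S(B,A;p)$ (rather than by re-averaging over $V$ in \eqref{define-key-local-fourier-transforms}); both are valid.
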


\begin{proof}
We may assume that $p$ is sufficiently large in terms of $n$, else  the result is trivial.
If $p\nmid A$,
then it follows from applying Theorem~\ref{THM:dimension}
and the Lang--Weil bound
in 
\eqref{eq:LW}
that 
\begin{align*}
S(A,B;p)
\ll_n p^{n^2+1} \bm{1}_{p\mid \tr(A)}.
\end{align*}
Similarly, if $p\nmid B$, then averaging over $V$ in \eqref{define-key-local-fourier-transforms} gives
\begin{equation*}
S(A,B;p)
\ll \#\left\{(U,Z)\in \mathsf{M}_n(\FF_p)^2: ZU-UZ+B = 0\right\}
\ll_n p^{n^2+1} \bm{1}_{p\mid \tr(B)}.
\end{equation*}
Combining these bounds,  we arrive at the statement of the lemma. 
\end{proof}

\section{Fourier analysis and proof of the main result}

In this section we 
prove Theorem \ref{THM:main1} via   harmonic analysis, by combining 
work of  Fouvry and Katz \cite{FK} with  
 our discussion  in the previous section on exponential sums over finite fields.
We shall assume throughout this section that $n\geq 2$.

Let $p$ be an auxiliary prime in the interval $[T,2T^2]$
whose size will be optimised later.
Then 
$$
N(T)\leq 
\#\{(X,Y)\in \mathsf{M}_n(\ZZ)^2: \norm{X},\norm{Y}\le T,\; p\mid XY-YX\}.
$$
Consider the function
$$
w(X)\defeq \prod_{1\leq i,j\leq n}\left(\frac{\sin \pi X_{ij}}{\pi X_{ij}}\right)^2,
$$
for any $X\in \mathsf{M}_n(\RR)$. Identifying $\mathsf{M}_n(\RR)$ with $\RR^{n^2}$, this  has Fourier transform 
\begin{equation}\label{fourier}
\hat{w}(A)=\int_{\mathsf{M}_n(\RR)} w(X)e(\tr (AX)) \mathrm{d} X =
\prod_{1\leq i,j\leq n}\max\{1-|A_{ij}|,0\},
\end{equation}
where $\mathrm{d} X=\prod_{i,j}\mathrm{d} X_{ij}$. 
Moreover it is clear that $(\frac{4}{\pi^2})^{n^2}\leq w(\frac{1}{2}X)\leq 1$ if $|X|\leq 1$.
It now follows that 
\begin{align*}
N(T) 
&\ll_n  \sum_{\substack{(X,Y) \in \mathsf{M}_n(\ZZ)^2\\
p \mid XY-YX}} w\Big(\frac{1}{2T}X\Big)w\Big(\frac{1}{2T}Y\Big)\\
&=
\sum_{\substack{(U,V) \in \mathsf{M}_n(\FF_p)^2\\
UV-VU=0}}   
\sum_{\substack{X' \in \mathsf{M}_n(\ZZ)}} w\Big(\frac{U+pX'}{2T}\Big)
\sum_{\substack{Y' \in \mathsf{M}_n(\ZZ)}} 
w\Big(\frac{V+pY'}{2T}\Big).
\end{align*}
An application of Poisson summation 
reveals that 
\begin{align*}
\sum_{\substack{X' \in \mathsf{M}_n(\ZZ)}} w\Big(\frac{U+pX'}{2T}\Big)
&=
\sum_{A\in \mathsf{M}_n(\ZZ)} 
\int_{\mathsf{M}_n(\RR)} 
w\Big(\frac{U+pX'}{2T}\Big)
e(\tr (AX')) \mathrm{d} X'\\
&=
\left(\frac{2T}{p}\right)^{n^2}
\sum_{A\in \mathsf{M}_n(\ZZ)} 
e_p(-\tr (AU))  \hat w\left(\frac{2T}{p}A\right),
\end{align*}
on making an obvious change of variables. We have a similar identity for the sum over $Y'$, all of which leads to the expression
\begin{align*}
N(T) 
&\ll_n  
\left(\frac{T}{p}\right)^{2n^2}
\sum_{(A,B)\in \mathsf{M}_n(\ZZ)^2}  
\hat w\left(\frac{2T}{p}A\right)
\hat w\left(\frac{2T}{p}B\right)
S(A,B;p),
\end{align*}
in the notation of \eqref{eq:Sp}.

Appealing to \eqref{fourier}, we finally deduce that 
\begin{align*}
N(T) 
&\ll_n  
\left(\frac{T}{p}\right)^{2n^2}
\sum_{\substack{(A,B)\in \mathsf{M}_n(\ZZ)^2\\
|A|,|B|\le \frac12p/T}} 
|S(A,B;p)|.
\end{align*}
If $p\mid (A,B)$ then only  $A=B=0$ contribute to  the sum. Thus  
it follows from 
\eqref{eq:FF} that 
\begin{equation}\label{eq:N-U}
N(T) 
\ll_n  
\left(\frac{T}{p}\right)^{2n^2}
\left(p^{n^2+n} + U(p,T)
 \right),
\end{equation}
where
$$
 U(p,T)=
\sum_{\substack{(A,B)\in \mathsf{M}_n(\ZZ)^2\\
|A|,|B|\le \frac12p/T\\
p\nmid (A,B)
}} 
|S(A,B;p)|.
$$
We now appeal to a {\em stratification} result  by Fouvry and Katz \cite[Thm.~1.1]{FK}
to estimate $U(p,T)$.
This produces 
subschemes $V_{2n^2}\subset V_{2n^2-1}\subset \cdots \subset V_2\subset V_1\subset \A_\ZZ^{2n^2}$, with $\dim(V_j\otimes \CC)\leq 2n^2-j$ for $1\leq j\leq 2n^2$, 
such that 
$$
S(A,B;p)\ll_n p^{\frac{n^2+n}{2}+\frac{j-1}{2}}, 
$$
whenever the reduction modulo $p$ of $(A,B)$ corresponds to vector in $\A^{2n^2}(\FF_p)\setminus V_j(\FF_p)$, under the isomorphism $\mathsf{M}_n^2\cong \A^{2n^2}$.
It is convenient to put $V_0=\A_\ZZ^{2n^2}$ and $V_{2n^2+1}=\emptyset$.
Combining 
this with Lemma 
\ref{lem:exp}, we therefore deduce that 
\begin{align*}
U(p,T)
&\ll_n  
\sum_{j=1}^{2n^2+1}
\sum_{\substack{(A,B)\in R_j}}
\min\left\{p^{\frac{n^2+n}{2}+\frac{j-1}{2}}~,~  p^{n^2+1}\right\},
\end{align*}
where
$R_j$ denotes the set of 
$(A,B)\in \mathsf{M}_n(\ZZ)^2$ with 
$|A|,|B|\le \frac12p/T$ for which 
the reduction of $(A,B) \bmod{p}$ is non-zero and 
belongs to the set $V_{j-1}(\FF_p)\setminus V_j(\FF_p)$.
We have $R_j\subseteq R_j'$, where 
$R_j'$ is the set of 
$(A,B)\in \mathsf{M}_n(\ZZ)^2$ with 
$|A|,|B|\le \frac12p/T$ for which 
the reduction of $(A,B) \bmod{p}$ belongs to  $V_{j-1}(\FF_p)$.
We now recall the statement of 
 \cite[Lemma~4]{bhb} with $r=1$. Given a subscheme $W\subset \A_\ZZ^N$
 and an affine variety $V\subset \A_{\FF_p}^N$, with 
 $\dim(W\otimes \CC)\leq \ell$ and $\dim V\leq k$, this states that 
 $$
 \#\left\{
 \mathbf{t}\in W\cap \ZZ^N : |\mathbf{t}|\leq H, ~\mathbf{t} \bmod{p}\in V(\FF_p) 
 \right\}\ll_{D, N} H^\ell p^{k-\ell} +H^k,
 $$
 for any $H\geq 1$, where $D=\max\{\deg(W\otimes \CC),\deg V\}$. 
 Note that 
$H^\ell p^{k-\ell}\leq H^k$ if $k\leq \ell$ and $H\leq p$.
  We apply this with 
 $W=\mathsf{M}_n^2$,
 $V=V_{j-1}$,
 $\ell=2n^2$,
 $k=2n^2-j+1$,
 and $H=p/T$.
 Noting that $p\geq T$, it now follows that 
$$
\#R_j\leq 
\#R_j'\ll_n \left(\frac{p}{T}\right)^{2n^2-j+1}.
$$
But then we may conclude that 
\begin{align*}
U(p,T)
&\ll_n   \left(\frac{p}{T}\right)^{2n^2+1} \left(
\sum_{j=1}^{n^2-n+3}
\left(\frac{T}{\sqrt{p}}\right)^{j}
p^{\frac{n^2+n-1}{2}}
+
\sum_{j=n^2-n+4}^{2n^2+1}
\left(\frac{T}{p}\right)^{j}
p^{n^2+1}\right).
\end{align*}
Since $p\in [T,2T^2]$ we see that the first term is maximised at the largest value of $j$, while the second term is maximised when $j$ is least. 
This leads to the expression
\begin{align*}
U(p,T)
&\ll_n   \left(\frac{p}{T}\right)^{2n^2+1} \left(T^{n^2-n+3} p^{n-2} +
T^{n^2-n+4}p^{n-3}\right)\\
&\ll_n   \left(\frac{p}{T}\right)^{2n^2}\cdot  T^{n^2-n+2} p^{n-1},
\end{align*}
since $p\geq T$.

Returning to \eqref{eq:N-U}, we have now established that 
$$
N(T) 
\ll_n  
\frac{T^{2n^2}}{p^{n^2-n}}+
T^{n^2-n+2} p^{n-1}.
 $$
This is optimised at $p^{n^2-1} \asymp T^{n^2+n-2}$, which leads to the  final bound for $N(T)$ recorded in 
 Theorem \ref{THM:main1}.

The idea of
choosing an auxiliary modulus $p$
and using harmonic analysis 
has been applied to other varieties in the past, such as by
 Fujiwara \cites{Fujiwara1,Fujiwara2}, and by 
Shparlinski and Skorobogatov \cites{SS,Skoro}.
Moreover, using finer geometric information,
Heath-Brown \cite{Heath}
has given stronger results in many cases
by working with a composite modulus $pq$.
Once combined with Fouvry--Katz stratification \cite{FK}, 
the simplest version of Fujiwara's method gives the following
general result.

\begin{theorem}
Let $V\subset \A_\ZZ^N$ be a subscheme with $\dim(V\otimes\CC)=D$.
Assume that 
there is a positive density set  $\mathcal{P}$  
of  primes such that for all $p\in \mathcal{P}$,  
and for all non-zero vectors $c\in \FF_p^N$,
we have
\begin{equation}
\label{general-condition}
\sum_{x\in V(\FF_p)} e_p(c_1x_1+\dots+c_Nx_N)
\ll_V p^{D-L},
\end{equation}
for some $L$ such that $2L\in \ZZ$.
Then for $T\ge 1$, we have
\begin{equation}
\label{general-bound}
\#\{x\in V(\ZZ)\cap [-T,T]^N\} \ll_V
T^{D-L+\frac{L^2}{N-D+L}}.
\end{equation}
\end{theorem}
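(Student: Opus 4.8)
The plan is to rerun, essentially verbatim, the argument used above to prove Theorem~\ref{THM:main1}, now with the abstract hypothesis \eqref{general-condition} playing the role of Lemma~\ref{lem:exp}. First I would fix a prime $p\in\mathcal P$, subject to $T\le p\le 2T^2$ and of a size to be optimised at the end, and pass from the variety to a congruence:
\[
\#\{x\in V(\ZZ)\cap[-T,T]^N\}\le\#\{x\in\ZZ^N\cap[-T,T]^N:\ x\bmod p\in V(\FF_p)\}.
\]
Using the same smooth weight $w$ as before, with Fourier transform $\hat w$ given by \eqref{fourier}, writing each such $x$ as $u+pm$ with $u$ running over (lifts of) $V(\FF_p)$ and $m$ over $\ZZ^N$, and applying Poisson summation in $m$, one is led --- exactly as in the proof of Theorem~\ref{THM:main1} --- to
\[
\#\{x\in V(\ZZ)\cap[-T,T]^N\}\ll_V\Big(\tfrac Tp\Big)^N\sum_{\substack{c\in\ZZ^N\\ |c|\le p/(2T)}}\big|\widehat S(c;p)\big|,
\]
where $\widehat S(c;p):=\sum_{x\in V(\FF_p)}e_p(c_1x_1+\dots+c_Nx_N)$.

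The contribution of $c=0$ is $(\tfrac Tp)^N\#V(\FF_p)\ll_V T^D/p^{N-D}$ by the Lang--Weil bound. For $c\ne0$ I would invoke the Fouvry--Katz stratification \cite[Thm.~1.1]{FK}, exactly as in the proof of Theorem~\ref{THM:main1}: it yields subschemes $V_N\subset\dots\subset V_1\subset\A^N_\ZZ$ with $\dim(V_j\otimes\CC)\le N-j$ such that $\widehat S(c;p)\ll_V p^{D/2+(j-1)/2}$ whenever the reduction of $c$ lies in $\A^N(\FF_p)\setminus V_j(\FF_p)$. On the stratum $V_{j-1}\setminus V_j$ one therefore has $|\widehat S(c;p)|\ll_V\min\{p^{D/2+(j-1)/2},\,p^{D-L}\}$, the second bound being \eqref{general-condition}, while the number of admissible $c$ in this stratum is $\ll_V(p/T)^{N-j+1}$ by \cite[Lemma~4]{bhb} (applied with $W=\A^N$, $V=V_{j-1}$ and $H=p/T\le p$). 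Summing the two resulting geometric progressions over $j$ --- using $T\le p\le 2T^2$ to see, as before, that each is dominated by its extreme term --- I expect to arrive at
\[
\#\{x\in V(\ZZ)\cap[-T,T]^N\}\ll_V\frac{T^D}{p^{N-D}}+T^{D-2L}p^L.
\]
Balancing the two terms leads to the choice $p\asymp T^{(N-D+2L)/(N-D+L)}$, which lies in $[T,2T^2]$ since $L\ge0$ and $D\le N$, and substituting back produces the exponent $D-L+\frac{L^2}{N-D+L}$ of \eqref{general-bound}.

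The step I expect to be the main obstacle --- and the only place the positive-density hypothesis is genuinely used --- is the last one: one must actually locate a prime $p\in\mathcal P$ of size $\asymp T^{(N-D+2L)/(N-D+L)}$, since the two displayed terms are balanced only in a bounded neighbourhood of that value. If $\mathcal P$ has a natural density $\delta>0$, then $\#(\mathcal P\cap[X,2X])\sim\delta X/\log X\to\infty$, so $\mathcal P$ meets $[X,2X]$ for all large $X$ and nothing is lost; if only a positive lower density is available, the same counting still produces a prime in $[X,X^{1+\varepsilon}]$ for any fixed $\varepsilon>0$, hence \eqref{general-bound} up to an extra factor $T^{\varepsilon}$. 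A secondary point is that the bookkeeping in the middle paragraph is cleanest when $L\le D/2$, so that the generic stratum carrying the square-root bound $p^{D/2}$ is non-empty and the two partial sums are monotone as in the proof of Theorem~\ref{THM:main1}; in the degenerate range $L>D/2$ the hypothesis \eqref{general-condition} is strong enough that using only $|\widehat S(c;p)|\ll_V p^{D-L}$ for $c\ne0$ already gives a bound at least as good, so \eqref{general-bound} holds a fortiori.
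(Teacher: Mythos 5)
Your proposal is correct and follows exactly the route the paper intends: the paper gives no separate proof of this theorem, presenting it as the direct abstraction of the argument for Theorem~\ref{THM:main1}, and your run-through (Poisson summation with the weight $w$, Fouvry--Katz stratification combined with the hypothesis \eqref{general-condition}, the counting lemma \cite[Lemma~4]{bhb}, and optimisation of $p$) reproduces that argument faithfully, including the correct final exponent. One transcription slip: the $c=0$ contribution is $(T/p)^N\#V(\FF_p)\ll_V T^N/p^{N-D}$, not $T^D/p^{N-D}$; your subsequent choice $p\asymp T^{(N-D+2L)/(N-D+L)}$ and the exponent $D-L+\frac{L^2}{N-D+L}$ are precisely what balancing the \emph{correct} term $T^N/p^{N-D}$ against $T^{D-2L}p^L$ yields, so the conclusion stands once the typo is fixed. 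Finally, your worry about locating $p\in\mathcal{P}$ is resolved without any $T^{\ve}$ loss: positive (lower) density gives a constant $C=C(\mathcal{P})$ such that $\mathcal{P}\cap[X,CX]\neq\emptyset$ for all large $X$, and since $p$ enters the final bound only through $O_N(1)$ powers, replacing the ideal $p$ by one within a constant factor costs only a constant.
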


This bound 
improves on
the \emph{dimension growth bound} $O_{\ve,V}(T^{D-1+\eps})$
of Salberger \cite{salb} and Vermeulen \cite{AffDGC}
(when it applies), provided only that 
$N-D\geq 4$ and 
$L\geq \frac{3}{2}$
in \eqref{general-condition}. 
Note that 
\eqref{general-bound} recovers the bound in Theorem 
\ref{THM:main1} when  $N = 2n^2$, $D = n^2+n$ and $L = n-1$.
Moreover, if $V$ lies in an algebraic family\footnote{That is, $V$ is the fibre over a $\ZZ$-point of a dominant morphism $\Delta\maps W\to M$, where $W$ is a subscheme of an affine space and $M$ is a closed subscheme of an affine space.
For example, if $V$ is a complete intersection, then one could simply ask for the sum of the degrees and number of variables of the defining equations to be bounded.}
 $F$
in the sense of Bonolis--Kowalski--Woo \cite{uniformFK},
then one could hope for \eqref{general-condition} and \eqref{general-bound} to be made fairly uniform over $V$,
perhaps with a polylogarithmic dependence on the coefficients
as in work of Bonolis--Pierce--Woo \cite{BPW}.

\section{Flatness of the commutator map}
\label{prove-flatness}

In this section we shall prove 
Theorem~\ref{THM:dimension}.
We shall often find it notationally convenient to adopt the expectation notation 
$\EE_{g\in S}=\frac{1}{\#S}\sum_{g\in S}$, for any non-empty subset $S\subseteq \GL_n(\FF_q)$.
Let $p$ be any prime.
(Eventually, we will assume $p\ge n$.)
Fix a non-zero $M\in \mathsf{V}_n(\ol{\FF}_p)$ and 
note that 
\begin{equation*}
\dim\{(U,V)\in \mathsf{M}_n(\ol{\FF}_p)^2: UV-VU = M\}
= -1 + \dim \mathcal{X},
\end{equation*}
where
\begin{equation*}
\mathcal{X} \defeq
\{(U,V,\lambda)\in \mathsf{M}_n(\ol{\FF}_p)^2 \times \GG_m(\ol{\FF}_p): UV-VU = \lambda M\}.
\end{equation*}
We shall bound $\dim \mathcal{X}$ via point counting on $\mathcal{X}$.
We take $q=p^m\to \infty$, with $m\in \NN$ sufficiently large so that
$M\in \mathsf{V}_n(\FF_q)$ and
the variety
$\mathcal{X}$
has an irreducible component that is
invariant under $\Gal(\ol{\FF}_p/\FF_q)$.
It will suffice to estimate 
the quantity
\begin{align*}
\Sigma(M)&=
\frac{1}{q}\sum_{U,V\in \mathsf{M}_n(\FF_q)}\sum_{\lambda\in \GG_m(\FF_q)}
\bm{1}_{UV-VU = \lambda M} ,
\end{align*}
since  then
\begin{equation}\label{eq:games}
\Sigma(M)
\geq (1+O_n(q^{-1/2}))\, q^{\dim\{(U,V)\in \mathsf{M}_n(\ol{\FF}_p)^2: UV-VU = M\}}
\end{equation}
by  the Lang--Weil estimate
for the variety $\mathcal{X}$.

Let $\psi(\cdot) \defeq e_p(\tr_{\FF_q/\FF_p}(\cdot))$ on $\FF_q$.
We proceed by noting that 
\begin{align*}
\Sigma(M)
&= \frac{1}{q\cdot \#\mathsf{M}_n(\FF_q)}\sum_{U,V,Z\in \mathsf{M}_n(\FF_q)}
\sum_{\lambda\in \GG_m(\FF_q)} \psi(\tr(Z(UV-VU-\lambda M))) \\
&= \frac{1}{\#\mathsf{M}_n(\FF_q)} \sum_{U,V,Z\in \mathsf{M}_n(\FF_q)}
\psi(\tr(Z(UV-VU)))\, (\bm{1}_{\tr(ZM)=0} - q^{-1}) \\
&= \sum_{\substack{V,Z\in \mathsf{M}_n(\FF_q) \\ VZ-ZV=0}} (\bm{1}_{\tr(ZM)=0} - q^{-1}),
\end{align*}
where in the last step we average over $U$ using the cyclic property of the trace function.
We now exploit the linearity of the equations $VZ-ZV=0$ and $\tr(ZM)=0$
in the coordinates of $Z$.
Since the hyperplane $\tr(ZM)=0$ has codimension either $0$ or $1$
in the linear space $C(V)\defeq \{Z\in \mathsf{M}_n(\ol{\FF}_q):VZ-ZV=0\}$, we find that
\begin{equation*}
\begin{split}
\Sigma(M)
&= \sum_{V\in \mathsf{M}_n(\FF_q)} q^{\dim C(V)}
(q^{-1}\bm{1}_{C(V)\not\belongs M^\perp}+\bm{1}_{C(V)\belongs M^\perp}-q^{-1}) \\
&= \sum_{V\in \mathsf{M}_n(\FF_q)} q^{\dim C(V)}
(1-q^{-1}) \bm{1}_{C(V)\belongs M^\perp},
\end{split}
\end{equation*}
where $M^\perp\defeq \{A\in \mathsf{M}_n(\ol{\FF}_q): \tr(AM)=0\}$.
Then, since $\dim C(V)$ depends only on the conjugacy class of $V$,
we find on averaging over conjugates of $V$ that
\begin{equation}
\label{G-averaging}
\Sigma(M)
= \sum_{V\in \mathsf{M}_n(\FF_q)} q^{\dim C(V)}
(1-q^{-1}) L(V,M),
\end{equation}
where
\begin{equation}
\label{define-LVM}
L(V,M)
\defeq \EE_{g\in \GL_n(\FF_q)} (\bm{1}_{C(gVg^{-1})\belongs M^\perp})
\le 1.
\end{equation}

The orbit of $V$ under the conjugation action of $\GL_n(\FF_q)$
has cardinality
\begin{equation*}
\mathscr{O}(V)
= \frac{\#\GL_n(\FF_q)}{\#(\GL_n(\FF_q)\cap C(V))},
\end{equation*}
by the orbit-stabiliser theorem.
Observe that  $\#\GL_n(\FF_q)\leq \#M_n(\FF_q)=q^{n^2}$. 
Moreover, the variety $\GL_n(\ol{\FF}_q)\cap C(V)$, which contains the identity matrix $I_n$, is a non-empty open subset of the linear space $C(V)$.
Thus $\#(\GL_n(\FF_q)\cap C(V))
\gg_n q^{\dim C(V)}$ by the Lang--Weil estimate, whence
$$
\mathscr{O}(V)
\ll_n q^{n^2 - \dim C(V)}.
$$
Therefore, if $K_n(\FF_q)\subseteq \mathsf{M}_n(\FF_q)$ denotes
a complete set of representatives for the conjugation action,
then breaking \eqref{G-averaging} into orbits gives
\begin{equation}
\begin{split}
\label{orbit-collapse}
\Sigma(M)
&= \sum_{V\in K_n(\FF_q)} \mathscr{O}(V)
q^{\dim C(V)} (1-q^{-1}) L(V,M) \\
&\ll_n q^{n^2} \sum_{V\in K_n(\FF_q)} L(V,M).
\end{split}
\end{equation}

Lemma~\ref{main-lem} will non-trivially bound $L(V,M)$.
The proof inducts on $n$
and uses an auxiliary averaging result, Lemma~\ref{aux-lem}.
First, we define some useful projection maps.
Given integers $1\le k\le n-1$ and a matrix $M\in \mathsf{M}_n(\FF_q)$,
define block matrices $p_{ij}(M)$ so that
\begin{equation*}
M = \inmat{p_{11}(M) & p_{12}(M) \\ p_{21}(M) & p_{22}(M)},
\end{equation*}
where $p_{ij}(M)$ has $k\bm{1}_{i=1} + (n-k)\bm{1}_{i=2}$ rows
and $k\bm{1}_{j=1} + (n-k)\bm{1}_{j=2}$ columns.

Lemma~\ref{aux-lem} concerns some averages over $\GL_n(\FF_q)$.
First we study averages over a certain subgroup.
Let $$E\defeq \{N\in \mathsf{M}_n(\FF_q): p_{11}(N) = 0,
\; p_{21}(N) = 0,
\; p_{22}(N) = 0\}
\cong \FF_q^{k\times (n-k)}.$$
Then $E$ is a vector space
such that $N_1N_2=0$ for all $N_1,N_2\in E$.
In particular,
$$1+E \defeq \{1+N: N\in E\}$$ is an abelian subgroup of $\GL_n(\FF_q)$,
where $(1+N)^{-1} = 1-N$ for all $N\in E$.

\begin{lemma}
\label{E-avg-lem}
Let $M\in \mathsf{M}_n(\FF_q)$,
with $M\ne 0$.
Let $1\le k\le n-1$.
Then the following hold:
\begin{enumerate}
\item We have
\begin{equation*}
\EE_{h\in 1+E} (\bm{1}_{p_{11}(h^{-1}Mh) = 0})
\le q^{-k}
+ \min_{h\in 1+E} (\bm{1}_{(p_{11},p_{21})(h^{-1}Mh) = 0}).
\end{equation*}

\item We have
\begin{equation*}
\EE_{h\in 1+E} (\bm{1}_{(p_{11},p_{22})(h^{-1}Mh) = 0})
\le q^{-(n-1)}
+ \min_{h\in 1+E} (\bm{1}_{(p_{11},p_{21},p_{22})(h^{-1}Mh) = 0}).
\end{equation*}

\item We have
\begin{equation*}
\EE_{h\in 1+E} (\bm{1}_{(p_{11},p_{12},p_{22})(h^{-1}Mh) = 0})
\le q^{-(n-1)}.
\end{equation*}
\end{enumerate}
\end{lemma}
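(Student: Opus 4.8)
The plan is to analyze how conjugation by $h = 1+N$ with $N \in E$ acts on the blocks $p_{ij}(M)$, exploiting that $E$ consists of nilpotent matrices of square zero supported in the upper-right $k\times(n-k)$ block. Writing $M$ in block form and using $(1+N)^{-1} = 1-N$, one computes
\[
(1+N)^{-1}M(1+N) = M + MN - NM - NMN,
\]
and then reads off each block $p_{ij}$ of the right-hand side. Because $N$ has only a $p_{12}$-block, say $N$ corresponds to $X \in \FF_q^{k\times(n-k)}$, the block computations are all \emph{affine} (indeed, at most quadratic) in $X$: for instance $p_{11}(h^{-1}Mh) = p_{11}(M) - X\,p_{21}(M)$ is affine in $X$, $p_{21}(h^{-1}Mh) = p_{21}(M)$ is constant in $X$, and $p_{22}(h^{-1}Mh) = p_{22}(M) + p_{21}(M)X$ is affine in $X$, while $p_{12}$ is the genuinely quadratic one. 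The first step is to record these four formulas carefully.

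For part (1): $p_{21}(h^{-1}Mh)$ is independent of $X$, so if it is nonzero then $(p_{11},p_{21})(h^{-1}Mh) \ne 0$ for every $h$, and separately $\EE_{h}(\bm 1_{p_{11}(h^{-1}Mh)=0})$ is the probability that the affine map $X \mapsto p_{11}(M) - X\,p_{21}(M)$ vanishes; since $p_{21}(M)\ne 0$ has some nonzero column, this affine condition on $X \in \FF_q^{k\times(n-k)}$ cuts out a coset of a subspace of codimension $\ge k$, giving probability $\le q^{-k}$. If instead $p_{21}(h^{-1}Mh) = 0$, then the min term is $1$ as soon as some $h$ also kills $p_{11}$, and otherwise one still has the trivial bound; either way the stated inequality holds. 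Part (2) is the same argument but now one additionally tracks $p_{22}(h^{-1}Mh) = p_{22}(M) + p_{21}(M)X$: conditioning on whether $p_{21}(h^{-1}Mh)=0$, in the generic branch the two affine conditions "$p_{11}=0$" and "$p_{22}=0$" are imposed simultaneously on $X$, and the point is that together they force $X$ into a coset of codimension $\ge n-1$ (the $k$ constraints from the $p_{11}$ equation on one set of coordinates of $X$ and the $n-k$ constraints from the $p_{22}$ equation combining to a subspace of the asserted codimension, using $k + (n-k) = n$ and that the relevant linear maps, coming from a nonzero $p_{21}$, have the right ranks). Part (3) is the cleanest: we are asking for the probability that $X$ simultaneously satisfies an affine constraint from $p_{11}$, an affine constraint from $p_{22}$, and the quadratic constraint from $p_{12}$; dropping the $p_{12}$ constraint can only increase the probability, so it suffices to bound the $(p_{11},p_{22})$ event, and since $M \ne 0$ forces $p_{21}(M)\ne 0$ in the only surviving case (if $p_{21}(M)=0$ one argues directly that $p_{11},p_{12},p_{22}$ cannot all vanish identically in $X$ unless $M=0$), the codimension is again $\ge n-1$.

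The main obstacle I anticipate is the book-keeping in parts (2) and (3): establishing that the two affine conditions "$p_{11}(h^{-1}Mh)=0$" and "$p_{22}(h^{-1}Mh)=0$" together carve out a subspace of codimension exactly $\ge n-1$ (rather than merely $\ge \max(k,n-k)$) requires noticing that the first condition constrains the coordinates of $X$ through left-multiplication by (columns of) $p_{21}(M)$ while the second constrains them through right-multiplication by (rows of) $p_{21}(M)$, and that a single nonzero entry of $p_{21}(M)$ already yields $k$ independent linear forms from the first and $n-k$ from the second with no overlap. One must also carefully handle the degenerate branch $p_{21}(h^{-1}Mh) = 0$ (equivalently $p_{21}(M)=0$, since that block is conjugation-invariant here), where the inequalities must be checked by hand — in part (3) this is where the hypothesis $M \ne 0$ is genuinely used. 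Everything else is routine finite-field counting, and the bound $q^{-k}$ resp.\ $q^{-(n-1)}$ drops out of the codimension count.
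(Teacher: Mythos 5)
Your plan is correct and essentially identical to the paper's proof: both compute the blocks of $(1-N)M(1+N)$, use that $p_{21}$ is invariant under conjugation by $1+E$ to split into the cases $p_{21}(M)\ne 0$ (where the conditions become affine in $X=p_{12}(N)$ and one counts codimensions: $k$ for part (1), $n-1$ for parts (2)--(3) via the map $X\mapsto (Xp_{21}(M),\,p_{21}(M)X)$) and $p_{21}(M)=0$ (absorbed by the $\min$ terms, or excluded by $M\ne 0$ in part (3)). One small correction for the write-up: the $k$ linear forms coming from the $p_{11}$-condition and the $n-k$ forms coming from the $p_{22}$-condition overlap in exactly one coordinate of $X$ (the entry where the determined column meets the determined row), which is precisely why the codimension is $n-1$ rather than $n$ --- consistent with the bound you assert, but not with your phrase ``with no overlap''.
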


\begin{proof}
First we record a general calculation:
for $N\in E$ we have
\begin{equation}
\begin{split}
\label{NMN}
&(1-N)M(1+N) \\
&= (1-N)(M+MN) \\
&= M+MN-NM-NMN \\
&= M + \inmat{0 & p_{11}(M)p_{12}(N) \\ 0 & p_{21}(M)p_{12}(N)}
- \inmat{p_{12}(N)p_{21}(M) & p_{12}(N)p_{22}(M) \\ 0 & 0}
- \inmat{0 & p_{12}(N)p_{21}(M)p_{12}(N) \\ 0 & 0}.
\end{split}
\end{equation}

(1):
Both sides of (1) are invariant under conjugation of $M$ by any element of $1+E$.
Moreover, if $p_{11}(h^{-1}Mh)$ is never zero,
then (1) is trivial.
Thus, after conjugating $M$ if necessary,
we may assume that $p_{11}(M)=0$.
Then, writing $h=1+N$, we get
\begin{align}
p_{11}(h^{-1}Mh)
&= -p_{12}(N)p_{21}(M),
\label{p11} \\
p_{21}(h^{-1}Mh)
&= p_{21}(M),
\label{p21}
\end{align}
by \eqref{NMN}.
In particular, by \eqref{p11},
\begin{equation*}
\EE_{h\in 1+E} (\bm{1}_{p_{11}(h^{-1}Mh) = 0})
= \EE_{N\in E} (\bm{1}_{p_{12}(N)p_{21}(M) = 0}).
\end{equation*}
Given a matrix $N\in E$, the condition $p_{12}(N)p_{21}(M) = 0$ holds
if and only if each \emph{row} of $p_{12}(N)$
lies in the left kernel $\mathscr{L}\belongs \FF_q^{n-k}$
of the $(n-k)\times k$ matrix $p_{21}(M)$.
If $p_{21}(M)\ne 0$,
then $\mathscr{L}$ is orthogonal to a non-zero column of $p_{21}(M)$.
Hence, one of the coordinates of $\mathscr{L}$
is a linear function of the others.
Thus, one of the columns of $p_{12}(N)$
is uniquely determined by others.
Since each column of $p_{12}(N)$ has $k$ entries, it follows that
\begin{equation*}
\EE_{h\in 1+E} (\bm{1}_{p_{11}(h^{-1}Mh) = 0})
\le q^{-k}.
\end{equation*}
On the other hand, if $p_{21}(M)=0$, then 
\begin{equation*}
\EE_{h\in 1+E} (\bm{1}_{p_{11}(h^{-1}Mh) = 0})
= 1
\end{equation*}
and $(p_{11},p_{21})(h^{-1}Mh) = 0$ for all $h\in 1+E$,
by \eqref{p11} and \eqref{p21}.
Now (1) follows.

(2):
As in the proof of (1),
we may assume that $(p_{11},p_{22})(M) = 0$.
Then
\begin{equation}
\label{p22}
p_{22}(h^{-1}Mh)
= p_{21}(M)p_{12}(N)
\end{equation}
for $h=1+N$,
by \eqref{NMN}.
The map
\begin{equation}
\label{define-map-Phi}
\Phi\maps \FF_q^{k\times (n-k)} \to \mathsf{M}_k(\FF_q)\times \mathsf{M}_{n-k}(\FF_q),
\quad A\mapsto (Ap_{21}(M), p_{21}(M)A)
\end{equation}
is linear in the entries of $A$.
By \eqref{p11} and \eqref{p22},
we have
\begin{equation*}
\EE_{h\in 1+E} (\bm{1}_{(p_{11},p_{22})(h^{-1}Mh) = 0})
= \EE_{N\in E} (\bm{1}_{p_{12}(N)\in \ker\Phi}),
\end{equation*}
If $A\in \ker\Phi$,
then the \emph{rows} of $A$ lie in the left kernel of $p_{21}(M)$,
and the \emph{columns} of $A$ lie in the right kernel of $p_{21}(M)$.
If $p_{21}(M)\ne 0$ and $A\in \ker\Phi$,
then it follows that there exist a column $C$ and row $R$ of $A$
such that the entries of $A$ are
uniquely determined by the entries in $A\setminus (C\cup R)$.
Since $\#(C\cup R) = k+(n-k)-1 = n-1$, it follows that
\begin{equation}
\label{if-p21-non-zero}
\EE_{h\in 1+E} (\bm{1}_{(p_{11},p_{22})(h^{-1}Mh) = 0})
\le q^{-(n-1)}
\end{equation}
if $p_{21}(M)\ne 0$.
On the other hand, if $p_{21}(M)=0$, then
\begin{equation*}
\EE_{h\in 1+E} (\bm{1}_{(p_{11},p_{22})(h^{-1}Mh) = 0})
= 1
\end{equation*}
and $(p_{11},p_{21},p_{22})(h^{-1}Mh) = 0$ for all $h\in 1+E$,
by \eqref{p11}, \eqref{p21}, and \eqref{p22}.
Hence (2) follows.

(3):
As in the proof of (1),
we may assume that $(p_{11},p_{12},p_{22})(M) = 0$.
Then
\begin{equation}
\label{p12}
p_{12}(h^{-1}Mh)
= -p_{12}(N)p_{21}(M)p_{12}(N)
\end{equation}
for $h=1+N$,
by \eqref{NMN}.
By \eqref{p11}, \eqref{p12}, and \eqref{p22},
we have
\begin{equation*}
\EE_{h\in 1+E} (\bm{1}_{(p_{11},p_{12},p_{22})(h^{-1}Mh) = 0})
= \EE_{N\in E} (\bm{1}_{p_{12}(N)\in \ker\Phi}),
\end{equation*}
where $\Phi$ is defined as in \eqref{define-map-Phi}.
However, since $M\ne 0$, we must have $p_{21}(M)\ne 0$.
Therefore, (3) follows from \eqref{if-p21-non-zero}.
\end{proof}

Henceforth, assume $p\ge n$,
so that the following simple combinatorial lemma holds.

\begin{lemma}
\label{k-comb}
Let $1\le k\le n-1$.
Suppose $x\in \FF_q^n$ is non-zero.
Then $\sum_{i\in I} x_i\ne 0$
for some $k$-element subset $I\belongs \{1,2,\dots,n\}$.
\end{lemma}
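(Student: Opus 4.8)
The plan is to argue by contradiction. Suppose that $\sum_{i\in I}x_i = 0$ for \emph{every} $k$-element subset $I\subseteq\{1,2,\dots,n\}$; I will deduce that $x=0$, contrary to hypothesis.

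The first step is to show that all coordinates of $x$ coincide. Fix two distinct indices $i,j\in\{1,\dots,n\}$. Since $1\le k\le n-1$, the complement $\{1,\dots,n\}\setminus\{i,j\}$ has $n-2\ge k-1\ge 0$ elements, so we may choose a subset $J$ of it with $\#J=k-1$. Then $I\defeq J\cup\{i\}$ and $I'\defeq J\cup\{j\}$ are both $k$-element subsets, and subtracting the two vanishing sums gives $x_i-x_j=\sum_{\ell\in I}x_\ell-\sum_{\ell\in I'}x_\ell=0$. As $i$ and $j$ were arbitrary, there is a scalar $c\in\FF_q$ with $x_i=c$ for every $i$.

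The second step invokes the standing assumption $p\ge n$. For any fixed $k$-subset $I$ we now have $0=\sum_{i\in I}x_i=kc$ in $\FF_q$. Since $1\le k\le n-1<p$, the integer $k$ is nonzero in $\FF_q$, hence $c=0$ and therefore $x=0$, a contradiction. This proves the lemma.

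I do not expect any genuine obstacle: the statement is an elementary fact, essentially the connectivity of the Johnson graph (any two $k$-subsets are linked by single-element swaps). The only points worth monitoring are that the auxiliary set $J$ exists, which is exactly why the range $1\le k\le n-1$ is imposed, and that $k$ is invertible modulo $p$, which is the single place $p\ge n$ enters and the reason the lemma genuinely fails otherwise (take $x=(1,1,\dots,1)$ whenever $p\mid k$).
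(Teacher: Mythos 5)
Your proof is correct and is essentially the paper's argument in contrapositive form: both rest on the single-element swap between $k$-subsets (showing either some sum is non-zero or all coordinates coincide) followed by the observation that $k$ is invertible modulo $p$ since $p\ge n>k$. Your handling of the existence of the auxiliary $(k-1)$-set $J$ and of the role of the standing hypothesis $p\ge n$ is accurate.
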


\begin{proof}
If $x_{n-1}\ne x_n$, say,
then either $x_1+\dots+x_{k-1}+x_{n-1}$ or $x_1+\dots+x_{k-1}+x_n$
must be non-zero.
By symmetry, it remains to consider the case $x_1=\dots=x_n$.
Then $x_1\ne 0$, since $x\ne 0$.
Thus $x_1+\dots+x_k = kx_1\ne 0$, because $p\ge n>k$.
\end{proof}

\begin{lemma}
\label{aux-lem}
Let $M\in \mathsf{M}_n(\FF_q)$, with 
$M\ne 0$.
Let $1\le k\le n-1$.
Then the following hold:
\begin{enumerate}
\item $\EE_{g\in \GL_n(\FF_q)}(\bm{1}_{(p_{11},p_{21})(g^{-1}Mg) = 0})
\ll_n q^{-k}$.

\item $\EE_{g\in \GL_n(\FF_q)}(\bm{1}_{p_{11}(g^{-1}Mg) = 0})
\ll_n q^{-k}$.

\item $\EE_{g\in \GL_n(\FF_q)}(\bm{1}_{(p_{11},p_{12},p_{22})(g^{-1}Mg) = 0})
\ll_n q^{-(n-1)}$.

\item $\EE_{g\in \GL_n(\FF_q)}(\bm{1}_{(p_{11},p_{21},p_{22})(g^{-1}Mg) = 0})
\ll_n q^{-(n-1)}$.

\item $\EE_{g\in \GL_n(\FF_q)}(\bm{1}_{(p_{11},p_{22})(g^{-1}Mg) = 0})
\ll_n q^{-(n-1)}$.

\item $\EE_{g\in \GL_n(\FF_q)}(\bm{1}_{p_{22}(g^{-1}Mg) = 0})
\ll_n q^{-(n-k)}$.

\item $\EE_{g\in \GL_n(\FF_q)}(\bm{1}_{\tr(p_{11}(g^{-1}Mg)) = 0})
\ll_n q^{-1}$.
\end{enumerate}
\end{lemma}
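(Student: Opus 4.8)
The plan is to derive all seven averages over $\GL_n(\FF_q)$ from the three averages over the abelian subgroup $1+E$ established in Lemma~\ref{E-avg-lem}, exploiting the conjugation-invariance of the whole setup together with Lemma~\ref{k-comb}. The basic mechanism is this: for a fixed $k$-element subset $I\subseteq\{1,\dots,n\}$, let $E_I$ be the analogue of $E$ with respect to the partition of coordinates into $I$ and its complement (so $1+E_I$ is conjugate in $\GL_n(\FF_q)$ to $1+E$ by a permutation matrix). Averaging first over a coset $g(1+E_I)$ and then over $g\in\GL_n(\FF_q)$, Lemma~\ref{E-avg-lem} gives, for each $I$, a bound on $\EE_{g}(\bm{1}_{p^I_{11}(g^{-1}Mg)=0})$ and the two-block and three-block variants, where $p^I_{ab}$ denotes the block projections relative to the partition $(I,I^c)$. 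The point of quantifying over \emph{all} such $I$ is that a single pair of block coordinates $(p_{11},p_{21})$ relative to the \emph{standard} partition already forces the vanishing of $p^I_{11}$ for \emph{every} $I$ of size $k$ contained in $\{1,\dots,k\}\cup\{\text{one extra index}\}$—more usefully, one runs this the other way: the event in (1)--(7) relative to the standard partition, for a suitable $M$, will be shown to imply one of the vanishing events controlled by Lemma~\ref{E-avg-lem} for \emph{some} $I$, and then Lemma~\ref{k-comb} guarantees that not all such events can be automatically satisfied when $M\ne 0$.

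Concretely I would proceed as follows. For (1), note $(p_{11},p_{21})(g^{-1}Mg)=0$ says that the first $k$ columns of $g^{-1}Mg$ vanish, i.e. $g^{-1}Mg$ kills the span of the first $k$ standard basis vectors; averaging over $g$ this is the statement that a random $k$-dimensional subspace is killed by $M$, which happens with probability $\ll_n q^{-k}$ unless $M=0$ — this can be seen directly, or bootstrapped from Lemma~\ref{E-avg-lem}(1) by first conjugating so that $p_{11}(M)=0$ and then averaging over $1+E$. Statement (2) follows from (1) since the event in (2) is weaker; but one must still get the same exponent $q^{-k}$, which is why one invokes Lemma~\ref{E-avg-lem}(1): after conjugating by a generic $g$, apply the $1+E$-average, whose right-hand side is $q^{-k}$ plus an indicator that itself has small $\GL_n$-average by (the already-proved case) (1). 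For (6), transpose/reverse the roles of the two blocks: $p_{22}$ vanishing is the $(n-k)$-block analogue of $p_{11}$ vanishing, so (6) is just (2) with $k$ replaced by $n-k$. Statements (3),(4),(5) are the three-block (resp. two-block on the diagonal) versions and come the same way from Lemma~\ref{E-avg-lem}(3),(2),(2): conjugate to put the relevant diagonal blocks to zero, average over $1+E$ to get $q^{-(n-1)}$ plus an indicator of a stronger vanishing event, and bound the latter's $\GL_n$-average by an already-established item of the lemma (an induction on the number of blocks required to vanish, from three down to one). Finally (7): $\tr(p_{11}(g^{-1}Mg))$ is, up to scaling, $\sum_{i\in I}(\text{eigenvalue-like coordinate})$ summed over the index set $I$ of size $k$ determined by $g$; conjugation-invariance lets us replace $I$ by an arbitrary $k$-subset, and Lemma~\ref{k-comb} says that for $M\ne 0$ there is \emph{some} $k$-subset on which this partial trace is non-zero — so the linear form $g\mapsto \tr(p_{11}(g^{-1}Mg))$ is not identically zero on $\GL_n(\FF_q)$, and being a non-trivial "coordinate-type" function its vanishing locus has density $\ll_n q^{-1}$ (e.g. by Lang--Weil applied to the hypersurface it cuts out in $\GL_n$, or by a direct count).

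The main obstacle is bookkeeping: making precise the reduction "conjugate $M$ so that the relevant blocks vanish, average over $1+E$" while keeping the averages over $\GL_n$ honest — the subtlety is that the conjugating element used to normalise $M$ is absorbed into the $g$-average by left-invariance of Haar (counting) measure on $\GL_n(\FF_q)$, but one has to check at each step that the "plus an indicator" error term from Lemma~\ref{E-avg-lem} is governed by a \emph{previously proved} item of Lemma~\ref{aux-lem} and not by the one currently being proved, so that the induction on the number of vanishing blocks is not circular. A secondary point needing care in (7) is that partial traces of a conjugate are not literally partial sums of eigenvalues; one should instead argue that $g\mapsto\tr(p_{11}(g^{-1}Mg))$ is a non-zero regular function on $\GL_n$ — nonvanishing because, taking $g$ a permutation matrix realising the subset $I$ from Lemma~\ref{k-comb}, $\tr(p_{11}(g^{-1}Mg))=\sum_{i\in I}M_{ii}$ after a further diagonal conjugation clearing off-diagonal contributions, which is non-zero by the lemma — and then the density bound $\ll_n q^{-1}$ is immediate from Lang--Weil.
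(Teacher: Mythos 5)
Your overall architecture matches the paper's: prove (1) by a direct kernel count, derive the other vanishing statements by inserting the inner average $\EE_{h\in 1+E}$ via the identity $\EE_{g\in\GL_n(\FF_q)}=\EE_{g\in\GL_n(\FF_q)}\EE_{h\in 1+E}$ and invoking Lemma~\ref{E-avg-lem}, with the resulting error indicators controlled by previously proved items, and handle (7) via Lemma~\ref{k-comb} together with Lang--Weil applied to the hypersurface $\tr(p_{11}(g^{-1}Mg))=0$ in the irreducible variety $\GL_n$. However, there is a genuine gap at item (4). You assign (4) to Lemma~\ref{E-avg-lem}(2), but that part of the lemma bounds the average of $\bm{1}_{(p_{11},p_{22})(h^{-1}Mh)=0}$ by $q^{-(n-1)}$ \emph{plus} the indicator of the stronger event $(p_{11},p_{21},p_{22})(h^{-1}Mh)=0$; that is, it reduces (5) to (4), not the reverse. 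There is no item of Lemma~\ref{aux-lem} whose event is implied by the event of (4) and which carries the exponent $q^{-(n-1)}$ for general $k$ (the events of (1) and (6) are implied, but they only give $q^{-\max(k,n-k)}$, which is weaker than $q^{-(n-1)}$ unless $k\in\{1,n-1\}$). So the ``induction on the number of vanishing blocks'' you describe runs into exactly the circularity you warn against, and you do not break it. The paper's fix is to deduce (4) from (3) by transposition: replacing $(M,g)$ by $(M^t,g^t)$ swaps $p_{12}$ and $p_{21}$ while permuting $\GL_n(\FF_q)$, so the average in (4) for $M$ equals the average in (3) for $M^t$, and (3) follows from Lemma~\ref{E-avg-lem}(3) with no error term. (A direct count would also work, since the event of (4) says $\im M\subseteq W\subseteq\ker M$ for the random $k$-plane $W=g\langle e_1,\dots,e_k\rangle$, an event of probability $\ll_n q^{-r(n-r)}\le q^{-(n-1)}$ with $r=\rank M\ge 1$; but you did not give this.)

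A second, smaller gap is in (7): Lemma~\ref{k-comb} applies to a non-zero \emph{vector}, and the diagonal of a non-zero matrix can vanish identically (e.g.\ $M$ strictly upper triangular), so ``$M\ne 0$'' does not by itself produce a $k$-subset of the diagonal with non-zero partial sum. One must first conjugate $M$ so that some diagonal entry is non-zero; the paper does this by noting, via the $k=1$ case of (2), that $\EE_{g\in\GL_n(\FF_q)}(\bm{1}_{(g^{-1}Mg)_{11}=0})<1$ for $q\gg_n 1$. With that step inserted, your argument for (7) --- a non-vanishing regular function on the irreducible variety $\GL_n$, then Lang--Weil --- is the paper's. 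The remaining items ((1), (2), (3), (5), (6)) are handled correctly and as in the paper, including the non-circular dependencies (1)$\Rightarrow$(2), (4)$\Rightarrow$(5), and (2)$\Rightarrow$(6).
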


\begin{proof}
Although (2) is stronger than (1),
and (5) is stronger than both (3) and (4),
it turns out that proving (1) first will help in proving (2),
and likewise for (3) and (4).

(1):
Let $e_1,\dots,e_n\in \FF_q^n$ be the usual coordinate basis vectors.
The condition $(p_{11},p_{21})(M)=0$ holds
if and only if $Me_j = 0$ for all $1\le j\le k$.
Thus
\begin{equation*}
\EE_{g\in \GL_n(\FF_q)}(\bm{1}_{(p_{11},p_{21})(g^{-1}Mg)=0})
= \EE_{g\in \GL_n(\FF_q)}(\bm{1}_{g^{-1}Mge_1=\dots=g^{-1}Mge_k=0}).
\end{equation*}
Note that $(ge_1,\dots,ge_k)$ is equally likely to be any list of $k$ linearly independent vectors.
Since the \emph{right} kernel of $M$ has dimension at most $n-1$,
and the number of lists of $k$ linearly independent vectors in $\FF_q^d$
is $\mathscr{V}(d) = \prod_{1\le i\le k} (q^d - q^{i-1})
\le (q^d)^k$,
it follows that
\begin{equation*}
\EE_{g\in \GL_n(\FF_q)}(\bm{1}_{(p_{11},p_{21})(g^{-1}Mg)=0})
\le \frac{\mathscr{V}(n-1)}{\mathscr{V}(n)}
\le \prod_{1\le i\le k} \frac{q^{n-1}}{q^n-q^{i-1}}
\ll_n q^{-k}.
\end{equation*}

(2):
We have
\begin{equation}
\label{E-averaging}
\EE_{g\in \GL_n(\FF_q)}(\bm{1}_{p_{11}(g^{-1}Mg) = 0})
= \EE_{g\in \GL_n(\FF_q)}
\EE_{h\in 1+E}(\bm{1}_{p_{11}((gh)^{-1}Mgh) = 0}).
\end{equation}
Applying Lemma~\ref{E-avg-lem}(1) with $g^{-1}Mg$ in place of $M$, we get
\begin{equation*}
\begin{split}
\EE_{g\in \GL_n(\FF_q)}(\bm{1}_{p_{11}(g^{-1}Mg) = 0})
&\le q^{-k}
+ \EE_{g\in \GL_n(\FF_q)}(\bm{1}_{(p_{11},p_{21})(g^{-1}Mg)=0}) \\
&\ll_n q^{-k},
\end{split}
\end{equation*}
by (1).

(3):
Mimicking \eqref{E-averaging}, we have
\begin{equation*}
\EE_{g\in \GL_n(\FF_q)}(\bm{1}_{(p_{11},p_{12},p_{22})(g^{-1}Mg) = 0})
= \EE_{g\in \GL_n(\FF_q)}
\EE_{h\in 1+E}(\bm{1}_{(p_{11},p_{12},p_{22})((gh)^{-1}Mgh) = 0}).
\end{equation*}
Applying Lemma~\ref{E-avg-lem}(3) with $g^{-1}Mg$ in place of $M$, we get (3).

(4):
Immediate from (3)
with $(M^t,g^t)$ in place of $(M,g)$.

(5):
Mimicking \eqref{E-averaging}, we have
\begin{equation*}
\EE_{g\in \GL_n(\FF_q)}(\bm{1}_{(p_{11},p_{22})(g^{-1}Mg) = 0})
= \EE_{g\in \GL_n(\FF_q)}
\EE_{h\in 1+E}(\bm{1}_{(p_{11},p_{22})((gh)^{-1}Mgh) = 0}).
\end{equation*}
Applying Lemma~\ref{E-avg-lem}(2) with $g^{-1}Mg$ in place of $M$, we get
\begin{equation*}
\begin{split}
\EE_{g\in \GL_n(\FF_q)}(\bm{1}_{(p_{11},p_{22})(g^{-1}Mg) = 0})
&\le q^{-(n-1)}
+ \EE_{g\in \GL_n(\FF_q)}(\bm{1}_{(p_{11},p_{21},p_{22})(g^{-1}Mg)=0}) \\
&\ll_n q^{-(n-1)},
\end{split}
\end{equation*}
by (4).

(6):
This follows from (2)
after replacing $k$ with $n-k$,
and conjugating $M$ by a suitable permutation matrix.

(7):
Assume $q\gg_n 1$.
By the $k=1$ case of (2),
we may conjugate $M$ to assume that its top left entry is non-zero.
Then by Lemma~\ref{k-comb},
there exist $k$ diagonal entries of $M$
whose total is non-zero.
By a further conjugation, we may assume $\tr(p_{11}(M)) \ne 0$.
Therefore, the subvariety $\tr(p_{11}(g^{-1}Mg)) = 0$
of $\GL_n(\ol{\FF}_q)$ is either empty or of codimension one,
since $\GL_n(\ol{\FF}_q)$ is irreducible.
The Lang--Weil bound now implies (7).
\end{proof}

We are finally ready to bound
the quantity $L(V,M)$ from \eqref{define-LVM}.

\begin{lemma}
\label{main-lem}
Let $V,M\in \mathsf{M}_n(\FF_q)$.
Let $f_V(t)\in \FF_q[t]$ be the \emph{radical}
of the characteristic polynomial of $V$.
Then
\begin{equation}
\label{main-lem-goal}
L(V,M)
\ll_n q^{1-\deg f_V} \bm{1}_{\tr(M)=0}
+ \bm{1}_{M=0}.
\end{equation}
\end{lemma}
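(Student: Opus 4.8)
The plan is to induct on $n$, using the structure of $V$ as encoded by its rational canonical form together with the projection maps $p_{ij}$ and the averaging estimates of Lemma~\ref{aux-lem}. We recall that $L(V,M) = \EE_{g\in \GL_n(\FF_q)} \bm{1}_{C(gVg^{-1})\belongs M^\perp}$, where $C(W) = \{Z : WZ=ZW\}$ and $M^\perp = \{A:\tr(AM)=0\}$. Note first that $C(W)$ always contains the identity and all powers of $W$; more relevantly, it contains the \emph{centraliser algebra} of $W$, which is large when $W$ has repeated eigenvalues or nontrivial Jordan structure. The inclusion $C(gVg^{-1})\belongs M^\perp$ forces $\tr(AM)=0$ for every $A$ commuting with $gVg^{-1}$; taking $A=I$ gives $\tr(M)=0$, so the bound trivialises unless $\tr(M)=0$, which explains the indicator $\bm{1}_{\tr(M)=0}$ in \eqref{main-lem-goal}. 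Taking $A$ to be a polynomial in $gVg^{-1}$ gives further linear constraints on $M$.

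The core of the argument is a case analysis on $\deg f_V$, i.e.\ on the number of distinct eigenvalues of $V$ over $\ol\FF_q$. If $\deg f_V = n$ (so $V$ is regular semisimple), then $C(V)$ is just the $n$-dimensional algebra of polynomials in $V$, and $C(gVg^{-1})\belongs M^\perp$ is equivalent to $\tr(g^{-1}Mg\cdot p(V))=0$ for all polynomials $p$ of degree $<n$; conjugating $V$ to a fixed diagonal form, this becomes a statement that $g^{-1}Mg$ has all $n$ diagonal entries zero (after diagonalising), and Lemma~\ref{aux-lem}(6) (iterated, or its underlying kernel-dimension count) gives the bound $q^{1-n} = q^{1-\deg f_V}$. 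For smaller $\deg f_V$, one splits $\FF_q^n = W_1\oplus W_2$ into a block of size $k$ and a block of size $n-k$ that is compatible with the generalised eigenspace decomposition of $V$: a suitably chosen invariant subspace lets us apply the projection estimates. The condition $C(gVg^{-1})\belongs M^\perp$ then implies vanishing of various blocks $p_{ij}(g^{-1}Mg)$ — for instance, the identity on $W_1$ extended by zero lies in $C$, forcing $\tr(p_{11}(g^{-1}Mg))=0$, and similarly one extracts constraints of the type handled in Lemma~\ref{aux-lem}(1)--(7). One then factors the average over $\GL_n(\FF_q)$ through the parabolic stabilising $W_1$ and applies the inductive hypothesis to the block of size $\le n-1$ together with the appropriate item of Lemma~\ref{aux-lem}, multiplying the exponents. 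The combinatorial input from Lemma~\ref{k-comb} (valid since $p\ge n$) is what guarantees that, after conjugation, some $k\times k$ principal block has nonzero trace, so that the relevant subvariety of $\GL_n$ has codimension one and Lang--Weil applies; this feeds item~(7) of Lemma~\ref{aux-lem} and is used to turn ``$\tr(\text{block})=0$'' constraints into genuine savings.

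The bookkeeping that I expect to be the main obstacle is choosing, for each conjugacy type of $V$, the right value of $k$ and the right invariant splitting so that (i) the inclusion $C(gVg^{-1})\belongs M^\perp$ really does force enough block-vanishing conditions, and (ii) the exponents coming from Lemma~\ref{aux-lem} and from the inductive hypothesis add up to at least $\deg f_V - 1$ in every case — in particular handling the scalar-block and single-Jordan-block subcases uniformly, and making sure that when $M\neq 0$ but $\tr(M)=0$ one never accidentally lands in the regime where the average is $\Theta(1)$. One must also check the edge cases $\deg f_V = 1$ (where $V$ is a single eigenvalue plus nilpotent, $C(V)$ is as large as possible, and the claimed bound $q^{1-\deg f_V}\bm{1}_{\tr M=0} = \bm{1}_{\tr M=0}$ is essentially vacuous, matching the term $\bm{1}_{M=0}$ only through the separate observation that if additionally $M\ne 0$ one still gets something, though \eqref{main-lem-goal} does not demand it) and $M=0$ (trivial, $L(V,0)=1$). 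Once the case analysis is organised, each individual estimate is a direct application of an already-proved lemma, so the difficulty is entirely in the combinatorial selection of blocks and the verification that the exponents never fall short.
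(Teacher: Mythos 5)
Your outline follows the same strategy as the paper: reduce to $\tr(M)=0$, $M\ne 0$, $\deg f_V\ge 2$; split $V$ into two diagonal blocks with disjoint spectra so that $C(V)$ block-decomposes; induct on $n$ using Lemma~\ref{aux-lem} (with Lemma~\ref{k-comb} feeding part~(7)). But as written there is one genuine gap and one unproved crux. The gap: your block splitting "compatible with the generalised eigenspace decomposition" only exists over $\FF_q$ when the characteristic polynomial of $V$ has at least two distinct irreducible factors \emph{over $\FF_q$}. If the characteristic polynomial is a power of a single irreducible polynomial of degree $\ge 2$ (already for $n=2$ with irreducible characteristic polynomial), then $\deg f_V\ge 2$, the trivial bound does not suffice, and yet no splitting exists over $\FF_q$ — likewise your regular semisimple case assumes $V$ can be "conjugated to a fixed diagonal form", which fails when the eigenvalues are irrational. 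The paper disposes of this by observing that $L(V,M)\cdot\#\GL_n(\FF_q)$ counts $\FF_q$-points of a constructible set, so by Lang--Weil it suffices to bound the \emph{dimension} of that set, which may be computed after base change to an extension $\FF_{q^m}$ where $f_V$ splits completely. Some such device is needed; without it the induction simply does not start for these $V$.

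The unproved crux is the step you describe as "factor the average through the parabolic stabilising $W_1$ ... multiplying the exponents". What actually makes the exponents add is the exact identity
\begin{equation*}
L(V,M)
= \EE_{g\in \GL_n(\FF_q)}\, L(V_1,p_{11}(g^{-1}Mg))\, L(V_2,p_{22}(g^{-1}Mg)),
\end{equation*}
obtained by inserting an inner average over the block-diagonal (Levi, not parabolic) subgroup $\GL_k\times\GL_{n-k}$ and using $C(V)=\inmat{C(V_1)&0\\0&C(V_2)}$. One then expands the product of the two inductive bounds into four terms and must check each against the correct item of Lemma~\ref{aux-lem}; in particular the cross terms need $\bm{1}_{p_{11}(g^{-1}Mg)=0}$ and $\bm{1}_{p_{22}(g^{-1}Mg)=0}$ to save $q^{-k}\le q^{-\deg f_{V_1}}$ and $q^{-(n-k)}\le q^{-\deg f_{V_2}}$ respectively, and the "both traces vanish" term collapses to a single condition because $\tr(p_{11})+\tr(p_{22})=\tr(M)=0$. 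This is exactly the bookkeeping you flag as the main obstacle, and it does work out — but since it is the entire content of the inductive step, the proposal as it stands is a plan rather than a proof. (Your separate treatment of the regular semisimple case is unnecessary once the two-block induction is set up, and the worry about "scalar-block and single-Jordan-block subcases" is moot: those have $\deg f_V=1$ and are covered by the trivial bound.)
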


\begin{proof}
We use strong induction on $n\ge 1$.
Since $1\in C(gVg^{-1})$ for all $g$,
we may assume $\tr(M)=0$,
or else $L(V,M) = 0$.
Moreover, if $\deg f_V=1$ or $M=0$, then the trivial bound $L(V,M)\le 1$ suffices.
Therefore, we may assume from now on that
$\tr(M)=0$,
$M\ne 0$,
and $\deg f_V\ge 2$.

By definition, a \emph{constructible set} is a finite union of locally closed sets.
By the inclusion-exclusion principle, the usual Lang--Weil estimate (for varieties) extends to constructible sets.
By the Lang--Weil estimate
applied to the constructible set
\begin{align*}
&\{g\in \GL_n(\ol{\FF}_q): C(gVg^{-1})\belongs M^\perp\} \\
&= \bigcup_{0\le d\le n^2}
\{g\in \GL_n(\ol{\FF}_q): \dim(C(gVg^{-1})) = \dim(C(gVg^{-1})\cap M^\perp) = d\},
\end{align*}
it suffices to prove the desired inequality \eqref{main-lem-goal}
under the assumption that
$f_V$ splits completely in $\FF_q$.
By conjugation, we may assume that $V$ is in
rational canonical form, which coincides with Jordan normal form
because $f_V$ splits completely.
Since $\deg f_V\ge 2$, the matrix $V$ has at least $2$ distinct eigenvalues.
After permuting Jordan blocks if necessary, we may assume that
$$
V = \inmat{V_1 & 0 \\ 0 & V_2},
$$
where $V_1\in \mathsf{M}_k(\FF_q)$ and $V_2\in \mathsf{M}_{n-k}(\FF_q)$,
with $1\le k\le n-1$,
such that $V_1$ and $V_2$ share no eigenvalues.
Then, in particular,
\begin{equation}
\label{centraliser-block-decomposes}
C(V) = \inmat{C(V_1) & 0 \\ 0 & C(V_2)}.
\end{equation}

Let $p_1\defeq p_{11}$ and $p_2\defeq p_{22}$.
Let
\begin{equation}
\label{define-block-group}
H\defeq \inmat{\GL_k(\FF_q) & 0 \\ 0 & \GL_{n-k}(\FF_q)}
\subseteq \GL_n(\FF_q).
\end{equation}
Since $C(ghVh^{-1}g^{-1}) = g C(hVh^{-1}) g^{-1}$,
and $g^{-1}M^\perp g = (g^{-1}Mg)^\perp$ by the conjugation-invariance of $\tr(\cdot)$,
we have
\begin{equation}
\begin{split}
\label{key-induction-identity}
L(V,M)
&= \EE_{g\in \GL_n(\FF_q)}
\EE_{h\in H} (\bm{1}_{C(ghV(gh)^{-1})\belongs M^\perp}) \\
&= \EE_{g\in \GL_n(\FF_q)}
\EE_{h\in H} (\bm{1}_{C(hVh^{-1})\belongs (g^{-1}Mg)^\perp}) \\
&= \EE_{g\in \GL_n(\FF_q)}
L(V_1,p_1(g^{-1}Mg)) L(V_2,p_2(g^{-1}Mg)),
\end{split}
\end{equation}
where the last step uses
\eqref{centraliser-block-decomposes},
\eqref{define-block-group},
and the block matrix identity
$$
\tr(\inmat{A & 0 \\ 0 & B} M)
= \tr(A p_1(M)) + \tr(B p_2(M))
$$
for $(A,B)\in \mathsf{M}_k(\FF_q)\times \mathsf{M}_{n-k}(\FF_q)$.
By the inductive hypothesis applied to
the two factors of $L$ in \eqref{key-induction-identity}, we get
\begin{equation*}
\begin{split}
L(V,M)
&\ll_n \EE_{g\in \GL_n(\FF_q)}
\prod_{1\le i\le 2} (q^{1-\deg f_{V_i}}
\bm{1}_{\tr(p_i(g^{-1}Mg)) = 0}
+ \bm{1}_{p_i(g^{-1}Mg)=0}) \\
&\le q^{2-\deg f_V} \mathcal{P}_0
+ q^{1-\deg f_{V_1}} \mathcal{P}_1
+ q^{1-\deg f_{V_2}} \mathcal{P}_2
+ \mathcal{P}_3,
\end{split}
\end{equation*}
where
\begin{equation*}
\begin{split}
\mathcal{P}_0 &\defeq \EE_{g\in \GL_n(\FF_q)}(\bm{1}_{\tr(p_1(g^{-1}Mg)) = 0}
\bm{1}_{\tr(p_2(g^{-1}Mg)) = 0}), \\
\mathcal{P}_1 &\defeq \EE_{g\in \GL_n(\FF_q)}(\bm{1}_{\tr(p_1(g^{-1}Mg)) = 0}
\bm{1}_{p_2(g^{-1}Mg) = 0}), \\
\mathcal{P}_2 &\defeq \EE_{g\in \GL_n(\FF_q)}(\bm{1}_{\tr(p_2(g^{-1}Mg)) = 0}
\bm{1}_{p_1(g^{-1}Mg) = 0}), \\
\mathcal{P}_3 &\defeq \EE_{g\in \GL_n(\FF_q)}(\bm{1}_{p_1(g^{-1}Mg) = 0}
\bm{1}_{p_2(g^{-1}Mg) = 0}). \\
\end{split}
\end{equation*}

We now wish to bound the probabilities $\mathcal{P}_i$.
Since $$0 = \tr(M) = \tr(g^{-1}Mg)
= \tr(p_1(g^{-1}Mg)) + \tr(p_2(g^{-1}Mg)),$$
the probabilities $\mathcal{P}_i$ for $0\le i\le 2$ simplify as follows:
\begin{equation*}
\begin{split}
\mathcal{P}_0 &= \EE_{g\in \GL_n(\FF_q)}(\bm{1}_{\tr(p_1(g^{-1}Mg)) = 0}), \\
\mathcal{P}_1 &= \EE_{g\in \GL_n(\FF_q)}(\bm{1}_{p_2(g^{-1}Mg) = 0}), \\
\mathcal{P}_2 &= \EE_{g\in \GL_n(\FF_q)}(\bm{1}_{p_1(g^{-1}Mg) = 0}).
\end{split}
\end{equation*}
By parts~(7),
(6),
(2),
and~(5)
of Lemma~\ref{aux-lem},
respectively, we have
$\mathcal{P}_0 \ll_n q^{-1}$,
$\mathcal{P}_1 \ll_n q^{-(n-k)}
\le q^{-\deg f_{V_2}}$,
$\mathcal{P}_2 \ll_n q^{-k}
\le q^{-\deg f_{V_1}}$,
and $\mathcal{P}_3 \ll_n q^{1-n}
\le q^{1-\deg f_V}$.
Thus
\begin{equation*}
\begin{split}
L(V,M)
&\ll_n q^{1-\deg f_V}
+ q^{1-\deg f_{V_1}} q^{-\deg f_{V_2}}
+ q^{1-\deg f_{V_2}} q^{-\deg f_{V_1}}
+ q^{1-\deg f_V} \\
&= 4q^{1-\deg f_V},
\end{split}
\end{equation*}
since $f_{V_1}f_{V_2} = f_V$.
\end{proof}

Plugging Lemma~\ref{main-lem} into \eqref{orbit-collapse},
we get
\begin{equation*}
\Sigma(M)
\ll_n q^{n^2} \sum_{V\in K_n(\FF_q)}
(q^{1-\deg f_V} \bm{1}_{\tr(M)=0}
+ \bm{1}_{M=0}).
\end{equation*}
But  $K_n(\FF_q)$ is in bijection with
the set of rational canonical forms on $\mathsf{M}_n(\FF_q)$, where we recall 
that the \emph{rational canonical form} of  $M\in \mathsf{M}_n(\FF_q)$ 
consists of a partition $\lambda_\phi
=(\lambda_{\phi,1}\ge \lambda_{\phi,2}\ge \cdots\ge0)$
for each monic irreducible polynomial $\phi\in \FF_q[t]$,
such that the action of $M$ on $\FF_q^n$
is isomorphic to multiplication by $t$ on the vector space
$\bigoplus_\phi
\bigoplus_i (\FF_q[t]/\phi^{\lambda_{\phi,i}}\FF_q[t])$,
where $\sum_\phi \deg(\phi) \abs{\lambda_\phi} = n$.
Thus, by the prime number theorem in $\FF_q[t]$
and the fact that $n$ has only finitely many partitions, we deduce that 
\begin{equation*}
\#\{V\in K_n(\FF_q): \deg f_V = d\}
\ll_n q^d
\end{equation*}
for $1\le d\le n$.
Summing over $1\le d\le n$, we get
$$
\Sigma(M)
\ll_n q^{n^2+1} \bm{1}_{\tr(M)=0}
+ q^{n^2+n} \bm{1}_{M=0}.
$$
Finally, we conclude 
from \eqref{eq:games} that  Theorem~\ref{THM:dimension} holds.

\end{document}